\documentclass[10pt,reqno]{amsart}

\usepackage{amsthm, mathrsfs, amsmath, amstext, amsxtra, amsfonts, dsfont, amssymb}
\usepackage{color}
\usepackage{lmodern}
\definecolor{green_dark}{rgb}{0,0.6,0}
\usepackage[colorlinks, linkcolor=red, citecolor=blue, urlcolor=green_dark, pagebackref, hypertexnames=false]{hyperref}

\voffset=0mm  \hoffset=-10mm \textwidth=150mm \textheight=205mm


\newcommand{\R}{\mathbb R}
\newcommand{\C}{\mathbb C}

\newcommand{\ep}{\epsilon}
\newcommand{\Gact} {\gamma_{\text{c}}}

\newcommand{\re}[1]{\mbox{Re} #1} 
\newcommand{\im}[1]{\mbox{Im} #1} 
\newcommand{\reemph}[1]{\mbox{\emph{Re}} #1} 
\newcommand{\imemph}[1]{\mbox{\emph{Im}} #1}

\newcommand{\defendproof}{\hfill $\Box$} 

\newtheorem{theorem}{Theorem}[section]
\newtheorem{lem}[theorem]{Lemma} 

\newtheorem{coro}[theorem]{Corollary} 
\theoremstyle{definition}

\newtheorem{rem}[theorem]{Remark}

\title[Blowup for inhomogeneous NLS]{Blowup of $H^1$ solutions for a class of the focusing inhomogeneous nonlinear Schr\"odinger equation} 

\author[V. D. Dinh]{Van Duong Dinh}
\address[V. D. Dinh]{Institut de Math\'ematiques de Toulouse UMR5219, Universit\'e Toulouse CNRS, 31062 Toulouse Cedex 9, France}
\email{dinhvan.duong@math.univ-toulouse.fr}

\keywords{Inhomogeneous nonlinear Schr\"odinger equation; Blowup; Virial identity; Gagliardo-Nirenberg inequality}
\subjclass[2010]{35B44, 35Q55}

\begin{document}

\begin{abstract}
We consider a class of focusing inhomogeneous nonlinear Schr\"odinger equation
\[
i\partial_t u + \Delta u + |x|^{-b} |u|^\alpha u = 0, \quad u(0)=u_0 \in H^1(\R^d),
\]
with $0<b<\min\{2,d\}$ and $\alpha_\star\leq \alpha <\alpha^\star$ where $\alpha_\star =\frac{4-2b}{d}$ and $\alpha^\star=\frac{4-2b}{d-2}$ if $d\geq 3$ and $\alpha^\star = \infty$ if $d=1,2$. In the mass-critical case $\alpha=\alpha_\star$, we prove that if $u_0$ has negative energy and satisfies either $xu_0 \in L^2$ or $u_0$ is radial with $d\geq 2$, then the corresponding solution blows up in finite time. Moreover, when $d=1$, we prove that if the initial data (not necessarily radial) has negative energy, then the corresponding solution blows up in finite time. In the mass and energy intercritical case $\alpha_\star< \alpha <\alpha^\star$, we prove the finite time blowup for radial negative energy initial data as well as the finite time blowup below ground state for radial initial data in dimensions $d\geq 2$. This result extends the one of Farah in \cite{Farah} where the blowup below ground state was proved for data in the virial space $H^1\cap L^2(|x|^2 dx)$ with $d\geq 1$.
\end{abstract}

\maketitle

\section{Introduction}
\setcounter{equation}{0}
In this paper, we consider the Cauchy problem for the inhomogeneous nonlinear Schr\"odinger equation
\begin{equation}
\left\{
\begin{array}{ccl}
i\partial_t u + \Delta u + \mu |x|^{-b} |u|^\alpha u &=& 0, \\
u(0)&=& u_0,
\end{array} 
\right. \tag{INLS}
\label{INLS}
\end{equation}
where $u: \R \times \R^d \rightarrow \C$, $u_0:\R^d \rightarrow \C$, $\mu = \pm 1$ and $\alpha, b>0$. The parameters $\mu=1$ (resp. $\mu=-1$) corresponds to the focusing (resp. defocusing) case. The case $b=0$ is the well-known nonlinear Schr\"odinger equation which has been studied extensively over the last three decades. The inhomogeneous nonlinear Schr\"odinger equation arises naturally in nonlinear optics for the propagation of laser beams, and it is of a form
\begin{align}
i \partial_t u +\Delta u + K(x) |u|^\alpha u =0. \label{general INLS}
\end{align}
The $\eqref{INLS}$ is a particular case of $(\ref{general INLS})$ with $K(x)=|x|^{-b}$. The equation $(\ref{general INLS})$ has been attracted a lot of interest in a past several years. Berg\'e in \cite{Berge} studied formally the stability condition for soliton solutions of $(\ref{general INLS})$. Towers-Malomed in \cite{TowersMalomed} observed by means of variational approximation and direct simulations that a certain type of time-dependent nonlinear medium gives rise to completely stabe beams. Merle in \cite{Merle} and Rapha\"el-Szeftel in \cite{RaphaelSzeftel} studied $(\ref{general INLS})$ for $k_1<K(x)<k_2$ with $k_1, k_2>0$. Fibich-Wang in \cite{FibichWang} investigated $(\ref{general INLS})$ with $K(x):=K(\ep|x|)$  where $\ep>0$ is small and $K \in C^4(\R^d) \cap L^\infty(\R^d)$. The case $K(x) = |x|^{b}$ with $b>0$ is studied by many authors (see e.g. \cite{Chen, LiuWangWang, Zhu} and references therein). \newline
\indent In order to recall known results for the $\eqref{INLS}$, let us give some facts for this equation.  We first note that the $\eqref{INLS}$ is invariant under the scaling,
\[
u_\lambda(t,x):= \lambda^{\frac{2-b}{\alpha}} u(\lambda^2 t, \lambda x), \quad \lambda>0.
\]
An easy computation shows
\[
\|u_\lambda(0)\|_{\dot{H}^\gamma} = \lambda^{\gamma+\frac{2-b}{\alpha}-\frac{d}{2}} \|u_0\|_{\dot{H}^\gamma}.
\]
Thus, the critical Sobolev exponent is given by 
\begin{align}
\Gact := \frac{d}{2}-\frac{2-b}{\alpha}. \label{critical exponent}
\end{align}
Moreover, the $\eqref{INLS}$ has the following conserved quantities:
\begin{align}
M(u(t))&:= \int |u(t,x)|^2 dx = M(u_0), \label{mass conservation} \\
E(u(t)) &:= \int \frac{1}{2}|\nabla u(t,x)|^2 - \frac{\mu}{\alpha+2} |x|^{-b}|u(t,x)|^{\alpha+2} dx = E(u_0). \label{energy conservation} 
\end{align}
\indent The well-posedness for the $\eqref{INLS}$ was first studied by Genoud-Stuart in \cite[Appendix]{GenoudStuart} (see also \cite{Genoud-1D}) by using the argument of Cazenave \cite{Cazenave}. Note that the existence of $H^1$ solutions to $\eqref{INLS}$ is shown by the energy method which does not use Strichartz estimates. More precisely, Genoud-Stuart showed that the focusing $\eqref{INLS}$ with $0<b<\min\{2,d\}$ is well posed in $H^1$: 
\begin{itemize}
\item locally if $0<\alpha<\alpha^\star$,
\item globally for any initial data if $0<\alpha <\alpha_\star$,
\item globally for small initial data if $\alpha_\star \leq \alpha <\alpha^\star$,
\end{itemize}
where $\alpha_\star$ and $\alpha^\star$ are defined by
\begin{align}
\renewcommand*{\arraystretch}{1.2}
\alpha_\star:=\frac{4-2b}{d}, \quad \alpha^\star := \left\{ \begin{array}{c l}
\frac{4-2b}{d-2} & \text{if } d\geq 3, \\
\infty &\text{if } d=1, 2.
\end{array} \right. \label{alpha exponents}
\end{align}
\indent In the case $\alpha=\alpha_\star$ ($L^2$-critical), Genoud in \cite{Genoud} showed that the focusing $\eqref{INLS}$ with $0<b<\min\{2,d\}$ is globally well-posed in $H^1$ assuming $u_0 \in H^1$ and
\[
\|u_0\|_{L^2} <\|Q\|_{L^2},
\]
where $Q$ is the unique nonnegative, radially symmetric, decreasing solution of the ground state equation
\begin{align}
\Delta Q -Q +|x|^{-b}|Q|^{\frac{4-2b}{d}} Q=0. \label{ground state equation mass-critical}
\end{align}
Also, Combet-Genoud in \cite{CombetGenoud} established the classification of minimal mass blow-up solutions for the focusing $L^2$-critical $\eqref{INLS}$. \newline
\indent In the case $\alpha_\star <\alpha<\alpha^\star$, Farah in \cite{Farah} showed that the focusing $\eqref{INLS}$ with $0<b<\min \{2,d\}$ is globally well-posedness in $H^1, d\geq 1$ assuming $u_0 \in H^1$ and
\begin{align}
E(u_0)^{\Gact} M(u_0)^{1-\Gact} &< E(Q)^{\Gact} M(Q)^{1-\Gact}, \label{condition 1}\\
\|\nabla u_0\|^{\Gact}_{L^2} \|u_0\|_{L^2}^{1-\Gact} &< \|\nabla Q\|^{\Gact}_{L^2} \|Q\|^{1-\Gact}_{L^2}, \nonumber
\end{align}
where $Q$ is the unique nonnegative, radially symmetric, decreasing solution of the ground state equation
\begin{align}
\Delta Q- Q + |x|^{-b} |Q|^\alpha Q =0. \label{ground state equation introduction}
\end{align}
He also proved that if $u_0 \in H^1 \cap L^2(|x|^2dx)=:\Sigma$ satisfies $(\ref{condition 1})$ and 
\begin{align}
\|\nabla u_0\|^{\Gact}_{L^2} \|u_0\|_{L^2}^{1-\Gact} &> \|\nabla Q\|^{\Gact}_{L^2} \|Q\|^{1-\Gact}_{L^2},
\label{condition 2}
\end{align}
then the blow-up in $H^1$ must occur. Afterwards, Farah-Guzman in \cite{FarahGuzman1, FarahGuzman2} proved that the above global solution is scattering under the radial condition of the initial data. Note that the existence and uniqueness of solutions $Q$ to the elliptic equations $(\ref{ground state equation mass-critical})$ and $(\ref{ground state equation introduction})$ were proved by Toland \cite{Toland}, Yanagida \cite{Yanagida} and Genoud \cite{Genoud-unique} (see also Genoud-Stuart \cite{GenoudStuart}). \newline
\indent Guzman in \cite{Guzman} used Strichartz estimates and the contraction mapping argument to establish the local well-posedness as well as the small data global well-posedness for the $\eqref{INLS}$ in Sobolev space. Recently, the author in \cite{Dinhweighted} improved the local well-posedness in $H^1$ of Guzman by extending the validity of $b$ in the two and three dimensional spatial spaces. 
Note that the results of Guzman \cite{Guzman} and Dinh \cite{Dinhweighted} about the local well-posedness of $\eqref{INLS}$ in $H^1$ are a bit weaker than the one of Genoud-Stuart \cite{GenoudStuart}. More precisely, they do not treat the case $d=1$, and there is a restriction on the validity of $b$ when $d=2$ or $3$. However, the local well-posedness proved in \cite{Guzman, Dinhweighted} provides more information on the solutions, for instance, one knows that the global solutions to the defocusing $\eqref{INLS}$ satisfy $u \in L^p_{\text{loc}}(\R, W^{1.q})$ for any Schr\"odinger admissible pair $(p,q)$. This property plays an important role in proving the scattering for the $\eqref{INLS}$. Note also that the author in \cite{Dinhweighted} pointed out that one cannot expect a similar local well-posedness result for $\eqref{INLS}$ in $H^1$ as in \cite{Guzman, Dinhweighted} holds in the one dimensional case by using Strichartz estimates.   \newline
\indent In \cite{Dinhweighted}, the author used the so-called pseudo-conformal conservation law to show the decaying property of global solutions to the defocusing $\eqref{INLS}$ by assuming the initial data in $\Sigma$ (see before $(\ref{condition 2})$). In particular, he showed that in the case $\alpha \in [\alpha_\star, \alpha^\star)$, global solutions have the same decay as the solutions of the linear Schr\"odinger equation, that is for $2\leq q \leq \frac{2d}{d-2}$ when $d\geq 3$ or $2\leq q <\infty$ when $d=2$ or $2\leq q\leq\infty$ when $d=1$,
\[
\|u(t)\|_{L^q(\R^d)} \lesssim |t|^{-d\left(\frac{1}{2}-\frac{1}{q}\right)}, \quad \forall t\ne 0. 
\]
This allows the author proved the scattering in $\Sigma$ for a certain class of the defocusing $\eqref{INLS}$. Later, the author in \cite{Dinhenergy} made use of the classical Morawetz inequality and an argument of \cite{Visciglia} to derive the decay of global solutions to the defocusing $\eqref{INLS}$ with the initial data in $H^1$. Using the decaying property, he was able to show the energy scattering for a class of the defocusing $\eqref{INLS}$. We refer the reader to \cite{Dinhweighted, Dinhenergy} for more details. \newline
\indent The main purpose of this paper is to show the finite time blowup for the focusing $\eqref{INLS}$. Thanks to the well-posedness of Genoud-Stuart \cite{GenoudStuart}, we only expect blowup in $H^1$ when $\alpha_\star \leq \alpha < \alpha_\star$ which correspond to the mass-critical and the mass and energy intercritical cases. Note that the local well-posedness for the energy-critical $\eqref{INLS}$, i.e. $\alpha=\alpha^\star$ is still an open problem. \newline
\indent Our first result is the following finite time blowup for the $\eqref{INLS}$ in the mass-critical case $\alpha=\alpha_\star$.
\begin{theorem} \label{theorem blowup mass-critical}
Let $0<b<\min\{2,d\}$ and $u_0 \in H^1$. Then the corresponding solution to the focusing mass-critical $\eqref{INLS}$ blows up in finite time if one of the following conditions holds true:
\begin{itemize}
\item[1.] $d\geq 2, E(u_0)<0$ and $xu_0 \in L^2$,
\item[2.] $d \geq 2, E(u_0) <0$ and $u_0$ is radial,
\item[3.] $d=1$ and $E(u_0)<0$. 
\end{itemize}
\end{theorem}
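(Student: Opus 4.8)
The plan is to use the classical convexity (virial) method. For a real weight $\varphi$ and a solution $u$ of the focusing \eqref{INLS}, set $V_\varphi(t):=\int\varphi(x)|u(t,x)|^2\,dx$; then $V_\varphi'(t)=2\,\operatorname{Im}\int\overline{u(t)}\,\nabla\varphi\cdot\nabla u(t)\,dx$, and $V_\varphi''$ is given by a standard identity (see \cite{Farah, Dinhweighted}) whose leading terms are the Hessian form $4\sum_{j,k}\int\partial_j\partial_k\varphi\,\operatorname{Re}(\partial_j u\,\overline{\partial_k u})$, the biharmonic term $-\int\Delta^2\varphi\,|u|^2$, and nonlinear terms built from $\Delta\varphi$ and $\nabla\varphi\cdot\nabla(|x|^{-b})$. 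Taking $\varphi(x)=|x|^2$ this reduces to
\[
\frac{d^2}{dt^2}\int|x|^2\,|u(t,x)|^2\,dx \;=\; 8\,\|\nabla u(t)\|_{L^2}^2-\frac{4(\alpha d+2b)}{\alpha+2}\int|x|^{-b}\,|u(t,x)|^{\alpha+2}\,dx,
\]
and since $\alpha d+2b=4$ when $\alpha=\alpha_\star=\frac{4-2b}{d}$, the right-hand side collapses, by \eqref{energy conservation}, to the constant $16\,E(u_0)$. This identity together with \eqref{mass conservation} drives all three cases.

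\emph{Case 1.} If $xu_0\in L^2$, then $t\mapsto\int|x|^2|u(t)|^2\,dx$ is a nonnegative $C^2$ function whose second derivative is identically $16E(u_0)<0$; hence its graph lies below the downward parabola $\int|x|^2|u_0|^2\,dx+V'(0)\,t+8E(u_0)\,t^2$, which becomes negative in finite time. A nonnegative function cannot, so $T_{\max}<\infty$, and by the blow-up alternative of \cite{GenoudStuart} we get $\|\nabla u(t)\|_{L^2}\to\infty$ as $t\uparrow T_{\max}$.

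\emph{Cases 2 and 3.} Here $xu_0\in L^2$ is unavailable, so I would replace $|x|^2$ by a radial cutoff $\varphi_R(x)=R^2\varphi(|x|/R)$ with $\varphi$ smooth, $\varphi(r)=r^2$ for $r\le1$, $\varphi$ constant for $r\ge2$, and $\varphi''\le2$, $\varphi'(r)/r\le2$, $|\varphi'(r)|^2\lesssim\varphi(r)$ throughout. All terms in which $\varphi_R$ differs from $|x|^2$ are supported in $\{|x|\ge R\}$; using $\varphi''\le2$ and $\varphi'/r\le2$ one checks that the Hessian contribution over $\{|x|\ge R\}$ is dominated by $8\int_{|x|\ge R}|\nabla u|^2$, so it recombines with the corresponding piece of $16E(u_0)$ with a favourable sign, while the biharmonic term is $O(R^{-2}\|u_0\|_{L^2}^2)$. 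This leaves
\[
V_{\varphi_R}''(t)\;\le\;16E(u_0)+C\,R^{-2}\|u_0\|_{L^2}^2+C\int_{|x|\ge R}|x|^{-b}\,|u(t,x)|^{\alpha_\star+2}\,dx .
\]
For the last integral the point is to exploit spatial localization together with the weight. When $d\ge2$ and $u(t)$ is radial, the Strauss estimate $\|u\|_{L^\infty(|x|\ge R)}\lesssim R^{-(d-1)/2}\|u\|_{L^2}^{1/2}\|\nabla u\|_{L^2}^{1/2}$ gives $\int_{|x|\ge R}|x|^{-b}|u|^{\alpha_\star+2}\lesssim R^{-(b+\alpha_\star(d-1)/2)}\|u_0\|_{L^2}^{2+\alpha_\star/2}\|\nabla u(t)\|_{L^2}^{\alpha_\star/2}$; when $d=1$ the plain embedding $\|u\|_{L^\infty(|x|\ge R)}^2\lesssim\|u\|_{L^2}\|\nabla u\|_{L^2}$ together with $|x|^{-b}\le R^{-b}$ on $\{|x|\ge R\}$ — where $b>0$ is exactly what furnishes the decay — gives $\int_{|x|\ge R}|x|^{-b}|u|^{\alpha_\star+2}\lesssim R^{-b}\|u_0\|_{L^2}^{2+\alpha_\star/2}\|\nabla u(t)\|_{L^2}^{\alpha_\star/2}$, with no symmetry needed. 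Since $\alpha_\star/2=\frac{2-b}{d}<2$, Young's inequality converts this error into $\eta\,\|\nabla u(t)\|_{L^2}^2+C_\eta R^{-\sigma}$ with $\sigma>0$ and $\eta$ at our disposal, and choosing $\eta$ small and then $R$ large yields a differential inequality $V_{\varphi_R}''(t)\le8E(u_0)+\eta\|\nabla u(t)\|_{L^2}^2$; one then concludes by the convexity argument of Case 1, provided the residual $\eta\|\nabla u(t)\|_{L^2}^2$ is shown to be harmless.

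\emph{Main obstacle.} The hard part, and what distinguishes this mass-critical result from the intercritical one, is precisely this residual gradient term: in the intercritical case the exact virial carries a term $-(2(\alpha d+2b)-8)\|\nabla u\|_{L^2}^2$ with negative coefficient into which such errors are absorbed, but at $\alpha=\alpha_\star$ this coefficient vanishes, and $\|\nabla u(t)\|_{L^2}$ has no a~priori bound. To dispose of it I would argue by contradiction: assuming $T_{\max}=\infty$, use that $V_{\varphi_R}$ is nonnegative and bounded ($\le CR^2\|u_0\|_{L^2}^2$) and that $|V_{\varphi_R}'(t)|\lesssim R\,\|\nabla u(t)\|_{L^2}$, and confront the differential inequality — valid for every large $R$ — with these facts. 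If $\|\nabla u(t)\|_{L^2}$ stays bounded, one simply enlarges $R$ until $V_{\varphi_R}''\le4E(u_0)<0$, a contradiction; if it does not, the same inequality forces time-averages of $\|\nabla u(t)\|_{L^2}^{\alpha_\star/2}$ to grow faster than the conservation laws and the mass-critical structure of the nonlinearity permit. Making this dichotomy quantitative and uniform in $R$ — pinning down how the $R^{-\sigma}$ gains beat the $\|\nabla u\|_{L^2}$ losses — is the main technical work, and is where $d\ge2$ (a genuine radial-Sobolev decay) or $d=1$ (the one-dimensional embedding, with $b>0$ compensating for the missing symmetry) get used.
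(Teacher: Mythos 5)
Your Case 1 is exactly the paper's argument (standard virial identity plus Glassey's convexity). For Cases 2 and 3 you have correctly located the crux --- at $\alpha=\alpha_\star$ the coefficient of $\|\nabla u\|_{L^2}^2$ in the virial identity vanishes, so the error $\eta\,\|\nabla u(t)\|_{L^2}^2$ produced by your global application of Young's inequality has nothing negative to be absorbed into --- but you have not resolved it. The concluding ``dichotomy'' (bounded versus unbounded $\|\nabla u(t)\|_{L^2}$, with the claim that unboundedness contradicts ``the mass-critical structure'') is not an argument: even if $\|\nabla u(t)\|_{L^2}$ is unbounded you must still exclude blow-up in \emph{infinite} time, and integrating $V_{\varphi_R}''(t)\le 12E(u_0)+\eta\|\nabla u(t)\|_{L^2}^2$ yields no contradiction without control of $\int_0^t\|\nabla u(s)\|_{L^2}^2\,ds$, which neither conservation law supplies (the energy identity only bounds $\|\nabla u\|_{L^2}^2$ by the potential term, which Gagliardo--Nirenberg in turn bounds by $\|\nabla u\|_{L^2}^2$). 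This is precisely the step the theorem turns on, and your sketch leaves it as ``the main technical work.''

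The paper closes this gap with the Ogawa--Tsutsumi device, which differs from your estimate in one decisive way: the localization of the error gradient term is never discarded. For $d\ge 2$ (Lemma \ref{lem localized virial identity mass-critical}) the radial Sobolev embedding is applied to the weighted function $\chi_{2,R}^{d/(4-2b)}u$ rather than to $u$ itself, so Young's inequality produces $\ep\,\|\chi_{2,R}^{d/(4-2b)}\nabla u\|^2_{L^2(|x|>R)}$ --- still supported in $\{|x|>R\}$ and carrying the weight $\chi_{2,R}^{d/(2-b)}$ --- and this is absorbed into the manifestly nonpositive term $-2\int_{|x|>R}\chi_{1,R}|\nabla u|^2$ coming from the deficit of the Hessian of $\varphi_R$ relative to that of $|x|^2$. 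The absorption works because the cutoff is built from $\theta$ with the cubic correction $2[r-(r-1)^3]$, for which $\chi_{1,R}\sim (r/R-1)^2$ near $r=R$ while $\chi_{2,R}^{d/(2-b)}\sim (r/R-1)^{2d/(2-b)}$ with $2d/(2-b)>2$, so that $\chi_{1,R}-\tfrac{\ep}{d+2-b}\chi_{2,R}^{d/(2-b)}\ge 0$ for small $\ep$. For $d=1$ (Lemma \ref{lem virial estimate 1D}) the analogous absorption requires the coefficient $\|u(t)\|^{4-b}_{L^2(|x|>1)}$ of the exterior gradient term to be small; this is arranged by a bootstrap keeping the exterior mass below a threshold $a_0$ for all time, and arbitrary negative-energy data are reduced to this situation by the mass-critical scaling $u_\lambda(t,x)=\lambda^{-1/2}u(\lambda^{-2}t,\lambda^{-1}x)$, which preserves the mass, keeps the energy negative, and concentrates the data near the origin. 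Your correct observation that the weight $|x|^{-b}$ supplies decay in $d=1$ without any symmetry is the reason the theorem holds there, but it does not by itself defeat the unabsorbed $\eta\|\nabla u\|_{L^2}^2$; Cases 2 and 3 are therefore incomplete as written.
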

\begin{rem} \label{rem blowup mass-critical}
\begin{itemize}
\item[1.] This theorem extends the well-known finite time blowup for the focusing mass-critical nonlinear Schr\"odinger equation (i.e. $b=0$ in $\eqref{INLS}$) \cite{OgawaTsutsumi1,OgawaTsutsumi2} to the focusing mass-critical $\eqref{INLS}$.  
\item[2.] The condition $E(u_0)<0$ is a sufficient condition for the finite time blowup but it is not necessary. In fact, one can show (see Remark $\ref{rem sufficient condition}$) that for any $E>0$, there exists $u_0 \in H^1$ satisfying $E(u_0) =E$ and the corresponding solution blows up in finite time.
\end{itemize}
\end{rem}
\indent We now consider the intercritical (i.e. mass-supercritical and energy-subcritical) case $\alpha_\star<\alpha<\alpha^\star$. Our next result is the following blowup for the intercritical $\eqref{INLS}$.
\begin{theorem} \label{theorem blowup intercritical}
Let  
\[
d\geq 3,\quad 0<b<2, \quad \alpha_\star<\alpha <\alpha^\star,
\]
or
\[
d=2, \quad 0<b<2, \quad \alpha_\star<\alpha \leq 4.
\]
Let $u_0 \in H^1$ be radial and satisfy either $E(u_0)<0$ or, if $E(u_0) \geq 0$, we suppose that
\begin{align}
E(u_0) M(u_0)^\sigma < E(Q) M(Q)^\sigma, \label{condition below ground state}
\end{align} 
and
\begin{align}
\|\nabla u_0\|_{L^2} \|u_0\|^\sigma_{L^2} > \|\nabla Q\|_{L^2} \|Q\|^\sigma_{L^2}, \label{condition blowup intercritical}
\end{align}
where 
\begin{align}
\sigma:=\frac{1-\Gact}{\Gact} = \frac{2(2-b)-(d-2)\alpha}{d\alpha-2(2-b)}, \label{define sigma}
\end{align}
and $Q$ is the unique solution the ground state equation $(\ref{ground state equation introduction})$. Then the corresponding solution to the focusing intercritical $\eqref{INLS}$ blows up in finite time. Moreover, if $u_0$ satisfies $(\ref{condition below ground state})$ and $(\ref{condition blowup intercritical})$, then the corresponding solution satisfies
\begin{align}
\|\nabla u(t)\|_{L^2} \|u(t)\|^\sigma_{L^2}> \|\nabla Q\|_{L^2} \|Q\|^\sigma_{L^2}, \label{property blowup intercritical}
\end{align}
for any $t$ in the existence time.
\end{theorem}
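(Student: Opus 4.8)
The plan is to run a localized virial (convexity) argument in the spirit of Glassey and Ogawa--Tsutsumi \cite{OgawaTsutsumi1,OgawaTsutsumi2} for the mass-critical mechanism, and of Holmer--Roudenko (as adapted to the \eqref{INLS} by Farah \cite{Farah}) for the part below the ground state. Since the data is only assumed radial, I would fix a radial cut-off $\varphi_R(x)=R^2\varphi(|x|/R)$ with $\varphi(r)=r^2$ for $r\le1$, $\varphi$ constant for $r\ge2$, and $\varphi''\le2$, $\varphi'(r)/r\le2$ throughout, and set $V_R(t):=\int\varphi_R(x)|u(t,x)|^2\,dx$. Starting from $V_R'(t)=2\,\mathrm{Im}\int\overline u\,\nabla\varphi_R\cdot\nabla u\,dx$, a computation using \eqref{INLS} (justified for $H^1$ solutions by a standard regularization argument, cf. \cite{Cazenave}) gives
\[
V_R''(t)\le 8\|\nabla u(t)\|_{L^2}^2-\frac{4(d\alpha+2b)}{\alpha+2}\int|x|^{-b}|u(t)|^{\alpha+2}\,dx+C\,R^{-2}+C\int_{|x|\ge R}|x|^{-b}|u(t)|^{\alpha+2}\,dx,
\]
where on $\{|x|\le R\}$ (where $\varphi_R(x)=|x|^2$) one recovers exactly the virial quantity, while the two error terms come from $\{|x|\ge R\}$, using $|\Delta^2\varphi_R|\lesssim R^{-2}$ and the fact that the Hessian of $\varphi_R$ keeps the favourable sign $\le 8|\nabla u|^2$ there. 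Eliminating $\int|x|^{-b}|u|^{\alpha+2}$ via \eqref{energy conservation} and writing $\beta:=d\alpha+2b$ (so $\beta>d\alpha_\star+2b=4$ in the intercritical range), this becomes
\[
V_R''(t)\le -2(\beta-4)\|\nabla u(t)\|_{L^2}^2+4\beta E(u_0)+C\,R^{-2}+C\int_{|x|\ge R}|x|^{-b}|u(t)|^{\alpha+2}\,dx.
\]

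The nonlinear tail is controlled by the radial Sobolev (Strauss) estimate $\|u(t)\|_{L^\infty(|x|\ge R)}^2\lesssim R^{-(d-1)}\|\nabla u(t)\|_{L^2}\|u_0\|_{L^2}$, valid for $d\ge2$, which yields $\int_{|x|\ge R}|x|^{-b}|u(t)|^{\alpha+2}\,dx\lesssim R^{-b-\alpha(d-1)/2}\|\nabla u(t)\|_{L^2}^{\alpha/2}\|u_0\|_{L^2}^{2+\alpha/2}$. Since $\alpha<\alpha^\star\le4$ when $d\ge3$, while $\alpha\le4$ is assumed when $d=2$, the exponent $\alpha/4$ of $\|\nabla u(t)\|_{L^2}^2$ is $\le1$; Young's inequality then bounds the tail by $\theta\|\nabla u(t)\|_{L^2}^2+C_\theta R^{-\kappa}$ for any $\theta>0$ and some $\kappa=\kappa(d,b,\alpha)>0$ (at $\alpha=4$ the tail is already $\lesssim R^{-\kappa}\|\nabla u(t)\|_{L^2}^2$). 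Absorbing this, for every small $\theta>0$ and $R$ large,
\[
V_R''(t)\le -\left(2(\beta-4)-\theta\right)\|\nabla u(t)\|_{L^2}^2+4\beta E(u_0)+C_\theta\left(R^{-2}+R^{-\kappa}\right),\qquad t\in[0,T_{\max}).
\]
If $E(u_0)<0$, the gradient coefficient is negative, so for $R$ large $V_R''(t)\le 2\beta E(u_0)<0$; since $V_R\ge0$ and $V_R(t)\le V_R(0)+V_R'(0)t+\beta E(u_0)t^2$, we must have $T_{\max}<\infty$, and the $H^1$ blow-up alternative of the local theory yields finite-time blow-up.

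For $E(u_0)\ge0$ (more generally, whenever \eqref{condition below ground state}--\eqref{condition blowup intercritical} hold) I would first convert \eqref{condition blowup intercritical} into the uniform bound \eqref{property blowup intercritical} plus a quantitative lower bound on $\|\nabla u(t)\|_{L^2}$, by the usual variational argument. The sharp Gagliardo--Nirenberg inequality \cite{Farah}, $\int|x|^{-b}|v|^{\alpha+2}\,dx\le C_{GN}\|\nabla v\|_{L^2}^{\beta/2}\|v\|_{L^2}^{\alpha+2-\beta/2}$ with optimiser $Q$, combined with \eqref{energy conservation} and \eqref{mass conservation}, shows that $x(t):=\|\nabla u(t)\|_{L^2}^2\|u_0\|_{L^2}^{2\sigma}$ obeys $E(u_0)M(u_0)^\sigma\ge g(x(t))$, where $g(y):=\tfrac12 y-\tfrac{C_{GN}}{\alpha+2}y^{\beta/4}$ — the powers of $\|u_0\|_{L^2}$ match precisely because of the definition \eqref{define sigma} of $\sigma$. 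The Pohozaev identities obtained by pairing \eqref{ground state equation introduction} with $\overline Q$ and with $x\cdot\nabla\overline Q$ give $\int|x|^{-b}|Q|^{\alpha+2}\,dx=\tfrac{2(\alpha+2)}{\beta}\|\nabla Q\|_{L^2}^2$ and $E(Q)=\tfrac{\beta-4}{2\beta}\|\nabla Q\|_{L^2}^2>0$, and from these one checks that $g$ is maximised at $y_0:=\|\nabla Q\|_{L^2}^2\|Q\|_{L^2}^{2\sigma}$ with $g(y_0)=E(Q)M(Q)^\sigma$. Since \eqref{condition below ground state} gives $E(u_0)M(u_0)^\sigma\le(1-\delta)g(y_0)$ for some $\delta>0$, \eqref{condition blowup intercritical} gives $x(0)>y_0$, $g\ge0$ on $[0,y_0]$, and $t\mapsto x(t)$ is continuous, a continuity argument forces $x(t)>y_0$ for all $t\in[0,T_{\max})$ — which is \eqref{property blowup intercritical} — and in fact $x(t)\ge y_1$ for the larger root $y_1>y_0$ of $g=(1-\delta)g(y_0)$; equivalently $\|\nabla u(t)\|_{L^2}^2\ge(1+\delta')\|\nabla Q\|_{L^2}^2\|Q\|_{L^2}^{2\sigma}M(u_0)^{-\sigma}$ for some $\delta'>0$.

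Finally, inserting this lower bound into the localized virial inequality and using $(\beta-4)\|\nabla Q\|_{L^2}^2=2\beta E(Q)$ together with $E(u_0)M(u_0)^\sigma\le(1-\delta)E(Q)M(Q)^\sigma$, one obtains $-2(\beta-4)\|\nabla u(t)\|_{L^2}^2+4\beta E(u_0)\le-4\beta(\delta+\delta')E(Q)\|Q\|_{L^2}^{2\sigma}M(u_0)^{-\sigma}=:-2c_0<0$; keeping the Young parameter $\theta$ small enough (depending only on $\delta,\delta',Q,u_0$) preserves strict negativity, so that for $R$ large $V_R''(t)\le-c_0<0$ on $[0,T_{\max})$. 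As before, $V_R\ge0$ with $V_R''\le-c_0$ forces $T_{\max}<\infty$, hence finite-time blow-up. I expect the main obstacle to be the localized virial identity itself: one must choose $\varphi$ so that on $\{|x|>R\}$ the Hessian term keeps the sign $\le8|\nabla u|^2$ while the $\Delta^2\varphi_R$ and nonlinear-tail contributions are genuinely lower order, and then balance $\theta$ against $R$ so that the good coefficient $2(\beta-4)$ survives — this is exactly where $d\ge2$ and, for $d=2$, the restriction $\alpha\le4$ enter. The remaining points — justifying the virial identity for $H^1$ data and the Pohozaev bookkeeping identifying $\max g$ with $E(Q)M(Q)^\sigma$ — are routine.
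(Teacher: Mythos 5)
Your proposal is correct and follows essentially the same route as the paper: the localized virial estimate with the radial Sobolev embedding and Young's inequality to control the exterior nonlinear tail, energy conservation to conclude when $E(u_0)<0$, and the sharp Gagliardo--Nirenberg inequality plus the Pohozaev identities and a continuity argument to upgrade \eqref{condition below ground state}--\eqref{condition blowup intercritical} to the quantitative coercivity $\|\nabla u(t)\|_{L^2}\|u(t)\|_{L^2}^{\sigma}\ge(1+\delta')\|\nabla Q\|_{L^2}\|Q\|_{L^2}^{\sigma}$, which makes the virial quantity uniformly concave. The bookkeeping (the roles of $\beta-4>0$, the threshold $x_0=\|\nabla Q\|_{L^2}\|Q\|_{L^2}^{\sigma}$, and the balance of the Young parameter against $R$) matches the paper's argument.
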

\begin{rem} \label{rem blowup intercritical}
\begin{itemize}
\item[1.] The restriction $\alpha \leq 4$ when $d=2$ is technical due to the Young inequality (see Lemma $\ref{lem localized virial identity}$). 
\item[1.] In \cite{Farah}, Farah proved the finite time blowup for the focusing intercritical $\eqref{INLS}$ for initial data $u_0 \in H^1 \cap L^2(|x|^2dx), d\geq 1$ satisfying $(\ref{condition below ground state})$ and $(\ref{condition blowup intercritical})$.
\item[2.] It was proved in \cite{Farah} that if the initial data $u_0$ satisfies $(\ref{condition below ground state})$ and $\|\nabla u_0\|_{L^2} \|u_0\|^\sigma_{L^2} < \|\nabla Q\|_{L^2} \|Q\|^\sigma_{L^2}$, then the corresponding solution exists globally in time.
\end{itemize}
\end{rem}
\indent This paper is organized as follows. In Section 2, we recall the sharp Gagliardo-Nirenberg inequality related to the focusing $\eqref{INLS}$ due to Farah \cite{Farah}. In Section 3, we derive the standard virial identity and localized virial estimates for the focusing $\eqref{INLS}$. We will give the proof of Theorem $\ref{theorem blowup mass-critical}$ in Section 4. Finally, the proof of Theorem $\ref{theorem blowup intercritical}$ will be given in Section 5.
\section{Sharp Gagliardo-Nirenberg inequality} \label{section sharp gagliardo-nirenberg inequality}
\setcounter{equation}{0}
In this section, we recall the sharp Gagliardo-Nirenberg inequality related to the focusing $\eqref{INLS}$ due to Farah \cite{Farah}. 
\begin{theorem}[Sharp Gagliardo-Nirenberg inequality \cite{Farah}] \label{theorem sharp gagliardo-nirenberg inequality}
Let $d\geq 1, 0<b<\min\{2,d\}$ and $0<\alpha<\alpha^\star$. Then the Gagliardo-Nirenberg inequality
\begin{align}
\int |x|^{-b} |u(x)|^{\alpha+2} dx \leq C_{\emph{GN}} \|u\|^{\frac{4-2b -(d-2)\alpha}{2}}_{L^2} \|\nabla u\|^{\frac{d\alpha+2b}{2}}_{L^2},\label{sharp gagliardo-nirenberg inequality}
\end{align}
holds true, and the sharp constant $C_{\emph{GN}}$ is attended by a function $Q$, i.e.
\begin{align}
C_{\emph{GN}} = \int |x|^{-b} |Q(x)|^{\alpha+2} dx \div \Big[\|Q\|^{\frac{4-2b -(d-2)\alpha}{2}}_{L^2} \|\nabla Q\|^{\frac{d\alpha+2b}{2}}_{L^2} \Big], \label{sharp constant definition}
\end{align}
where $Q$ is the unique non-negative, radially symmetric, decreasing solution to the elliptic equation
\begin{align}
\Delta Q - Q+ |x|^{-b} |Q|^\alpha Q = 0. \label{ground state equation}
\end{align}
\end{theorem}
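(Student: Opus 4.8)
The plan is to identify $1/C_{\mathrm{GN}}$ with the infimum of the Weinstein-type quotient
\[
J(u):=\frac{\|u\|_{L^2}^{\frac{4-2b-(d-2)\alpha}{2}}\,\|\nabla u\|_{L^2}^{\frac{d\alpha+2b}{2}}}{\displaystyle\int|x|^{-b}|u(x)|^{\alpha+2}\,dx},\qquad u\in H^1\setminus\{0\},
\]
and to prove that this infimum is positive and attained, precisely at the rescalings of the ground state $Q$. Observe first that the choice of exponents makes $J$ invariant under the two-parameter family $u\mapsto\theta\,u(\kappa\,\cdot)$ with $\theta,\kappa>0$; this invariance will be used to normalize minimizing sequences.

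First I would check that $\inf J>0$, which is exactly $\eqref{sharp gagliardo-nirenberg inequality}$ with $C_{\mathrm{GN}}=1/\inf J<\infty$. Split the weighted integral over $\{|x|\le1\}$ and $\{|x|>1\}$. On the exterior region $|x|^{-b}\le1$, so the ordinary Gagliardo--Nirenberg inequality together with the Sobolev embedding $H^1\hookrightarrow L^{\alpha+2}$ (valid since $\alpha<\alpha^\star$) controls that contribution. On the interior region apply H\"older's inequality to peel off the weight, using that $|x|^{-b}\in L^r(\{|x|\le1\})$ for some $r>1$ because $b<d$, and then bound the remaining $L^{r'(\alpha+2)}$ norm of $u$ by $\|u\|_{L^2}$ and $\|\nabla u\|_{L^2}$ via Sobolev embedding and interpolation (here $b<2$ and $\alpha<\alpha^\star$ keep all relevant exponents admissible). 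Combining the two estimates and using the scaling invariance of $J$ to match homogeneities yields a uniform lower bound for $J$.

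Next, I would reduce to radial nonincreasing competitors by symmetrization: for $u\in H^1$ the Schwarz rearrangement $u^\ast$ of $|u|$ satisfies $\|u^\ast\|_{L^2}=\|u\|_{L^2}$, $\|\nabla u^\ast\|_{L^2}\le\|\nabla u\|_{L^2}$ (P\'olya--Szeg\H{o}), and $\int|x|^{-b}|u^\ast|^{\alpha+2}\,dx\ge\int|x|^{-b}|u|^{\alpha+2}\,dx$ by the Hardy--Littlewood rearrangement inequality, since $|x|^{-b}$ is itself radially symmetric nonincreasing; hence $J(u^\ast)\le J(u)$ and the infimum is unchanged if taken over nonnegative radial nonincreasing functions. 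Given a minimizing sequence $(u_n)$ of such functions, normalized via the two-parameter scaling so that $\|u_n\|_{L^2}=\|\nabla u_n\|_{L^2}=1$, the sequence is bounded in $H^1$ and converges, along a subsequence, weakly in $H^1$ to some radial $v$. The decisive point is the compactness of the embedding $H^1_{\mathrm{rad}}\hookrightarrow L^{\alpha+2}(|x|^{-b}\,dx)$: the Strauss pointwise bound $|u(x)|\lesssim|x|^{-(d-1)/2}\|u\|_{H^1}$ for $d\ge2$ (and $H^1(\R)\hookrightarrow L^\infty$ for $d=1$), combined with the decay of the weight at infinity and its local integrability near the origin --- all within the range $0<b<\min\{2,d\}$, $\alpha<\alpha^\star$ --- upgrades the weak convergence to $\int|x|^{-b}|u_n|^{\alpha+2}\to\int|x|^{-b}|v|^{\alpha+2}$. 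By the normalization this limit equals $1/\inf J>0$, so $v\ne0$; weak lower semicontinuity of the $H^1$ seminorms then gives $J(v)\le\liminf J(u_n)=\inf J$, i.e. $v$ is a minimizer.

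Finally, writing the Euler--Lagrange equation $\frac{d}{ds}J(v+s\varphi)\big|_{s=0}=0$ and using the two free scaling parameters to normalize the coefficients, a suitable rescaling $Q:=\kappa\,v(\rho\,\cdot)$ of the (nonnegative, radial, nonincreasing) minimizer solves $\eqref{ground state equation}$; invoking the uniqueness of the nonnegative radially symmetric decreasing solution of this elliptic equation (Toland, Yanagida, Genoud, as recalled in the introduction) identifies the minimizer with $Q$, and evaluating $J$ at $Q$ yields $\eqref{sharp constant definition}$. The main obstacle is the compactness argument of the preceding step: Sobolev embeddings on all of $\R^d$ are not compact, which is exactly why the symmetrization must be performed first, and the singularity of the weight $|x|^{-b}$ at the origin must be handled with care; it is the combination of the Strauss estimate with the parameter restrictions $0<b<\min\{2,d\}$ and $\alpha<\alpha^\star$ that restores compactness of the weighted radial embedding and rules out vanishing of the minimizing sequence.
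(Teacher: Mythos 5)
The paper itself gives no proof of this theorem: it is imported verbatim from \cite{Farah}, and Remark \ref{rem sharp gagliardo-nirenberg inequality} only observes that Farah's argument, written for $\alpha_\star<\alpha<\alpha^\star$, extends to all $0<\alpha<\alpha^\star$. Your proposal correctly reconstructs the Weinstein-type variational argument of that source in all essentials — positivity of $\inf J$ via the splitting $\{|x|\le 1\}\cup\{|x|>1\}$ with H\"older and Sobolev, reduction to nonnegative radial nonincreasing competitors via P\'olya--Szeg\H{o} and Hardy--Littlewood (using that $|x|^{-b}$ is symmetric decreasing), compactness of the weighted radial embedding from the Strauss decay plus the decay of the weight at infinity and its local integrability at the origin, and identification of the rescaled minimizer with $Q$ through the Euler--Lagrange equation and the uniqueness results of Toland, Yanagida and Genoud — so it takes essentially the same route as the cited proof.
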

\begin{rem} \label{rem sharp gagliardo-nirenberg inequality}
\begin{itemize}
\item[1.] In \cite{Farah}, Farah proved this result for $\alpha_\star<\alpha<\alpha^\star$. However, the proof and so the result are still valid for $0<\alpha\leq \alpha_\star$. 
\item[2.]  We also have the following Pohozaev identities:
\begin{align}
\|Q\|^2_{L^2} = \frac{4-2b-(d-2)\alpha}{d\alpha+2b} \|\nabla Q\|^2_{L^2} = \frac{4-2b-(d-2)\alpha}{2(\alpha+2)} \int |x|^{-b} |Q(x)|^{\alpha+2} dx. \label{pohozaev identities} 
\end{align}
In particular, 
\begin{align}
C_{\text{GN}}= \frac{2(\alpha+2)}{4-2b-(d-2)\alpha}\Big[\frac{4-2b-(d-2)\alpha}{d\alpha+2b} \Big]^{\frac{d\alpha+2b}{4}} \frac{1}{\|Q\|^\alpha_{L^2}}. \label{sharp constant gagliardo-nirenberg inequality}
\end{align}
\end{itemize}
\end{rem}
\section{Virial identities} \label{section virial identities}
\setcounter{equation}{0}
In this section, we derive virial identities and virial estimates related to the focusing $\eqref{INLS}$. Given a real valued function $a$, we define the virial potential by
\begin{align}
V_a(t):= \int a(x)|u(t,x)|^2 dx. \label{virial potential} 
\end{align}
By a direct computation, we have the following result (see e.g. \cite[Lemma 5.3]{TaoVisanZhang}.)
\begin{lem}[\cite{TaoVisanZhang}]  \label{lem derivative virial potential}
If $u$ is a smooth-in-time and Schwartz-in-space solution to 
\[
i\partial_t u +\Delta u = N(u),
\]
with $N(u)$ satisfying $\imemph{(N(u)\overline{u})}=0$, then we have
\begin{align}
\frac{d}{dt} V_a (t)= 2 \int_{\R^d}\nabla a(x) \cdot  \imemph	{(\overline{u}(t,x) \nabla u(t,x))} dx, \label{first derivative viral potential}
\end{align}
and
\begin{equation}
\begin{aligned}
\frac{d^2}{dt^2} V_a(t) = -\int \Delta^2 a(x) |u(t,x)|^2  dx & +  4 \sum_{j,k=1}^d \int \partial^2_{jk} a(x) \reemph{(\partial_k u(t,x) \partial_j \overline{u}(t,x))} dx  \\
&+ 2\int \nabla a(x)\cdot \{N(u), u\}_p(t,x) dx, 
\end{aligned} \label{second derivative virial potential} 
\end{equation}
where $\{f, g\}_p :=\reemph{(f\nabla \overline{g} - g \nabla \overline{f})}$ is the momentum bracket.
\end{lem}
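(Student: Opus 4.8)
The plan is to run the classical virial computation: differentiate $V_a$ under the integral sign, convert every time derivative into a spatial one by means of the equation $i\partial_t u+\Delta u = N(u)$ (i.e.\ $\partial_t u = i(\Delta u - N(u))$ and $\partial_t\overline{u} = -i(\Delta\overline{u}-\overline{N(u)})$), and integrate by parts. Since $u$ is smooth in time and Schwartz in space, differentiation under the integral in $(\ref{virial potential})$ is legitimate and all integrands below decay rapidly, so every integration by parts produces no boundary term; I will use this freely. The hypothesis $\imemph{(N(u)\overline{u})}=0$ enters exactly once, to annihilate the nonlinear term in the first-order identity.

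For $(\ref{first derivative viral potential})$, first I would establish the pointwise identity
\[
\partial_t|u|^2 = 2\,\reemph{(\overline{u}\,\partial_t u)} = 2\,\reemph{\big(i\overline{u}(\Delta u - N(u))\big)} = -2\,\imemph{(\overline{u}\Delta u)} + 2\,\imemph{(\overline{u}N(u))},
\]
in which the last term vanishes by hypothesis. Since $\nabla\cdot(\overline{u}\nabla u)=|\nabla u|^2+\overline{u}\Delta u$ and $|\nabla u|^2$ is real, taking imaginary parts gives $\imemph{(\overline{u}\Delta u)}=\nabla\cdot\imemph{(\overline{u}\nabla u)}$, hence $\partial_t|u|^2 = -2\,\nabla\cdot\imemph{(\overline{u}\nabla u)}$. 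Multiplying by $a$, integrating, and integrating by parts once yields $(\ref{first derivative viral potential})$.

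For $(\ref{second derivative virial potential})$, I would differentiate the identity just obtained: writing $p_j := \imemph{(\overline{u}\,\partial_j u)}$ one has $\frac{d^2}{dt^2}V_a = 2\sum_j\int \partial_j a\,\partial_t p_j\,dx$. Expanding $\partial_t p_j = \imemph{(\partial_t\overline{u}\,\partial_j u)} + \imemph{(\overline{u}\,\partial_j\partial_t u)}$ and inserting the equation, I would separate the Laplacian contribution from the $N(u)$ contribution. Using the pointwise identities $\overline{u}\Delta u + u\Delta\overline{u}=\Delta|u|^2-2|\nabla u|^2$ and $\sum_k\reemph{(\partial_k u\,\partial_j\partial_k\overline{u})}=\tfrac12\partial_j|\nabla u|^2$, the Laplacian part collapses to the pure divergence
\[
(\partial_t p_j)_{\mathrm{lin}} = -2\sum_k\partial_k\,\reemph{(\partial_k u\,\partial_j\overline{u})} + \tfrac12\,\partial_j\Delta|u|^2,
\]
while pairing complex conjugates reduces the $N(u)$ part to $(\partial_t p_j)_{\mathrm{nl}} = \reemph{\big(N(u)\,\partial_j\overline{u} - u\,\partial_j\overline{N(u)}\big)}$, which is precisely the $j$-th component of $\{N(u),u\}_p$. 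Substituting these back, integrating by parts once in the $\reemph{(\partial_k u\,\partial_j\overline{u})}$ term (which produces $4\sum_{j,k}\int\partial^2_{jk}a\,\reemph{(\partial_k u\,\partial_j\overline{u})}\,dx$), integrating by parts twice in the $\Delta|u|^2$ term (which produces $-\int\Delta^2 a\,|u|^2\,dx$), and leaving the bracket term untouched since it is already contracted against $\nabla a$, yields $(\ref{second derivative virial potential})$.

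The analytic part of the argument --- differentiation under the integral, vanishing of boundary terms, legitimacy of every integration by parts --- is immediate from the smooth-and-Schwartz hypothesis. The one place to be careful, and the main obstacle, is the bookkeeping in the middle step: tracking the factors of $i$, the $\tfrac12$'s, and the signs through the pointwise identities, and in particular checking that the nonlinear contribution to $\partial_t p_j$ is exactly the momentum bracket $\{N(u),u\}_p$ with no leftover divergence, so that it survives intact through the final integration by parts.
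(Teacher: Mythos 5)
Your computation is correct and is exactly the standard direct virial calculation that the paper itself does not reproduce but simply cites from Tao--Visan--Zhang; the decomposition into the pointwise divergence identity for $\partial_t|u|^2$, then the splitting of $\partial_t\,\imemph{(\overline{u}\,\partial_j u)}$ into a linear part (collapsing to $-2\sum_k\partial_k\,\reemph{(\partial_k u\,\partial_j\overline{u})}+\tfrac12\partial_j\Delta|u|^2$) and a nonlinear part equal to the momentum bracket, followed by the stated integrations by parts, is precisely the argument intended. All signs, factors of $i$, and the single use of the hypothesis $\imemph{(N(u)\overline{u})}=0$ check out.
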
 
We note that if $N(u)=-|x|^{-b} |u^\alpha u$, then 
\[
\{N(u), u\}_p = \frac{\alpha}{\alpha+2} \nabla(|x|^{-b} |u|^{\alpha+2}) +\frac{2}{\alpha+2} \nabla(|x|^{-b}) |u|^{\alpha+2}.
\] 
Using this fact, we immediately have the following result.
\begin{coro}\label{coro derivative virial potential}
If $u$ is a smooth-in-time and Schwartz-in-space solution to the focusing $\eqref{INLS}$, then we have 
\begin{equation}
\begin{aligned}
\frac{d^2}{dt^2} V_a(t) &= -\int \Delta^2 a(x) |u(t,x)|^2  dx  +  4 \sum_{j,k=1}^d \int \partial^2_{jk} a(x) \reemph{(\partial_k u(t,x) \partial_j \overline{u}(t,x))} dx  \\
&\mathrel{\phantom{=}}-\frac{2\alpha}{\alpha+2} \int \Delta a(x) |x|^{-b} |u(t,x)|^{\alpha+2} dx +\frac{4}{\alpha+2} \int \nabla a(x) \cdot \nabla(|x|^{-b}) |u(t,x)|^{\alpha+2} dx. 
\end{aligned} \label{second derivative virial potential application} 
\end{equation}
\end{coro}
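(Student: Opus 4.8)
The plan is to deduce Corollary~\ref{coro derivative virial potential} directly from Lemma~\ref{lem derivative virial potential} with the choice $N(u) = -|x|^{-b}|u|^\alpha u$. First I would check the hypothesis of the lemma: since $N(u)\overline{u} = -|x|^{-b}|u|^{\alpha+2}$ is real-valued, we have $\imemph{(N(u)\overline{u})} = 0$, so $(\ref{second derivative virial potential})$ applies and it remains only to recast the momentum-bracket term $2\int \nabla a \cdot \{N(u),u\}_p\,dx$.

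Second, I would compute $\{N(u),u\}_p = \reemph{(N(u)\nabla\overline{u} - u\nabla\overline{N(u)})}$. Writing $\nabla\overline{N(u)} = -\nabla(|x|^{-b}|u|^\alpha\overline{u})$ and expanding by the product rule, the contribution $-|x|^{-b}|u|^\alpha u\,\nabla\overline{u}$ arising from $-u\nabla\overline{N(u)}$ cancels exactly the term $N(u)\nabla\overline{u}$; what survives is already real, namely
\[
\{N(u),u\}_p = \nabla(|x|^{-b})|u|^{\alpha+2} + |x|^{-b}|u|^2\,\nabla(|u|^\alpha).
\]
Using the pointwise identity $|u|^2\nabla(|u|^\alpha) = \frac{\alpha}{\alpha+2}\nabla(|u|^{\alpha+2})$ (chain rule applied to $\rho=|u|^2$) together with $|x|^{-b}\nabla(|u|^{\alpha+2}) = \nabla(|x|^{-b}|u|^{\alpha+2}) - \nabla(|x|^{-b})|u|^{\alpha+2}$, this rearranges into the identity recorded just before the corollary,
\[
\{N(u),u\}_p = \frac{\alpha}{\alpha+2}\nabla(|x|^{-b}|u|^{\alpha+2}) + \frac{2}{\alpha+2}\nabla(|x|^{-b})|u|^{\alpha+2}.
\]

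Third, I would substitute this into $(\ref{second derivative virial potential})$ and integrate by parts once in the first of the two resulting integrals, using $\int \nabla a \cdot \nabla(|x|^{-b}|u|^{\alpha+2})\,dx = -\int \Delta a\,|x|^{-b}|u|^{\alpha+2}\,dx$. Collecting the pieces yields precisely the last line of $(\ref{second derivative virial potential application})$, and together with the first two terms inherited from Lemma~\ref{lem derivative virial potential} this is the claim.

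The point that needs care, and the one I would flag as the only real obstacle, is the singularity of the weight $|x|^{-b}$ at the origin: one must verify that every integral above converges and that the integration by parts produces no boundary contribution at $0$. This is exactly where the standing assumption $0<b<\min\{2,d\}$ enters (only $b<d$ is used here): for Schwartz $u$ the function $|x|^{-b}|u|^{\alpha+2}$ is integrable near the origin, and for the smooth radial weights $a$ relevant to this paper — which coincide with $|x|^2$ near the origin, so that $\nabla a$ vanishes there — the surface term over $\partial B(0,\epsilon)$ is $O(\epsilon^{d-b})\to 0$ as $\epsilon\to0$. Everything else is a routine application of the product and chain rules, so I do not anticipate any difficulty beyond this bookkeeping.
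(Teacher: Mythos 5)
Your proposal is correct and follows exactly the route the paper takes: apply Lemma~\ref{lem derivative virial potential} with $N(u)=-|x|^{-b}|u|^\alpha u$, verify the momentum-bracket identity $\{N(u),u\}_p = \frac{\alpha}{\alpha+2}\nabla(|x|^{-b}|u|^{\alpha+2})+\frac{2}{\alpha+2}\nabla(|x|^{-b})|u|^{\alpha+2}$, and integrate by parts once. Your added remark on the convergence of the integrals and the absence of a boundary contribution at the origin (using $b<d$ and $\nabla a(0)=0$ for the weights actually used) is a point the paper passes over in silence, but it does not change the argument.
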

A direct consequence of Corollary $\ref{coro derivative virial potential}$ is the following standard virial identity for the $\eqref{INLS}$. 
\begin{lem} \label{lem standard virial identity}
Let $u_0 \in H^1$ be such that $|x|u_0 \in L^2$ and $u:I \times \R^d \rightarrow \C$ the corresponding solution to the focusing $\eqref{INLS}$. Then, $|x| u \in C(I, L^2)$. Moreover, for any $t\in I$,
\begin{align}
\frac{d^2}{dt^2} \|x u(t)\|^2_{L^2_x} = 8\|\nabla u(t)\|^2_{L^2} - \frac{4(d\alpha+2b)}{\alpha+2}\int |x|^{-b}|u(t,x)|^{\alpha+2} dx. \label{standard virial identity}
\end{align}
\end{lem}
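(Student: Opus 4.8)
The plan is to derive \eqref{standard virial identity} as a special case of Corollary \ref{coro derivative virial potential} by choosing the weight $a(x) = |x|^2$, and then to justify the identity for general $H^1$ data with $|x|u_0 \in L^2$ by an approximation argument. First I would compute the relevant derivatives of $a(x)=|x|^2$: one has $\nabla a(x) = 2x$, $\partial^2_{jk} a(x) = 2\delta_{jk}$, $\Delta a(x) = 2d$, and $\Delta^2 a(x) = 0$. Substituting into \eqref{second derivative virial potential application}, the biharmonic term vanishes, the Hessian term becomes $4\sum_{j,k}\int 2\delta_{jk}\,\reemph{(\partial_k u\,\partial_j\overline{u})}\,dx = 8\int |\nabla u|^2\,dx$, and the first nonlinear term becomes $-\frac{2\alpha}{\alpha+2}\int 2d\,|x|^{-b}|u|^{\alpha+2}\,dx = -\frac{4d\alpha}{\alpha+2}\int |x|^{-b}|u|^{\alpha+2}\,dx$. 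For the last term I would use $\nabla(|x|^{-b}) = -b|x|^{-b-2}x$, so $\nabla a(x)\cdot\nabla(|x|^{-b}) = 2x\cdot(-b|x|^{-b-2}x) = -2b|x|^{-b}$, giving $\frac{4}{\alpha+2}\int (-2b)|x|^{-b}|u|^{\alpha+2}\,dx = -\frac{8b}{\alpha+2}\int |x|^{-b}|u|^{\alpha+2}\,dx$. Adding the two nonlinear contributions yields the coefficient $-\frac{4d\alpha + 8b}{\alpha+2} = -\frac{4(d\alpha+2b)}{\alpha+2}$, which matches \eqref{standard virial identity}.

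The remaining—and genuinely more delicate—point is the regularity and persistence statement: that $|x|u \in C(I,L^2)$ and that the formal computation above is legitimate for data that is merely in $H^1$ with finite variance, rather than Schwartz. The standard route, which I would follow, is to regularize in two ways simultaneously: approximate $u_0$ by Schwartz data $u_{0,n} \to u_0$ in $H^1$ with $|x|u_{0,n} \to |x|u_0$ in $L^2$, and truncate the weight $|x|^2$ by a family of bounded, smooth, compactly supported weights $a_R(x)$ with $a_R(x) = |x|^2$ on $|x|\le R$ and uniformly bounded derivatives up to fourth order (scaled appropriately). For the truncated weight, \eqref{first derivative viral potential} and \eqref{second derivative virial potential application} hold for genuine $H^1$ solutions by a density argument using continuous dependence of the flow in $H^1$ (from Genoud--Stuart's well-posedness), since all terms appearing involve only $u$, $\nabla u$ and the locally integrable weight $|x|^{-b}|u|^{\alpha+2}$, the latter controlled by the sharp Gagliardo--Nirenberg inequality of Theorem \ref{theorem sharp gagliardo-nirenberg inequality}. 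One then integrates \eqref{first derivative viral potential} with weight $a_R$ twice in time, obtains an $R$-uniform bound on $V_{a_R}(t)$ on compact time intervals (using the conservation of mass and energy to bound $\|\nabla u(t)\|_{L^2}$ and the nonlinear term, at least locally, or more carefully via a Gronwall-type estimate), and lets $R \to \infty$ by monotone convergence to conclude $|x|u(t) \in L^2$ with the claimed identity, together with continuity in $t$.

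The main obstacle is precisely this last limiting argument: one must show that the a priori bound on $V_{a_R}(t)$ does not blow up as $R\to\infty$ on the interval where the $H^1$ norm stays finite, i.e. that finite variance genuinely propagates. The clean way to handle this is to note that \eqref{first derivative viral potential} with the truncated weight gives $|\frac{d}{dt}V_{a_R}(t)| \lesssim \|\nabla a_R\|_{L^\infty}\,\|u(t)\|_{L^2}\,\|\nabla u(t)\|_{L^2} \lesssim R\,\|u_0\|_{L^2}\,\|\nabla u(t)\|_{L^2}$, which is not uniform in $R$; so instead one should estimate $\frac{d}{dt}V_{a_R}(t)$ in terms of $V_{a_R}(t)^{1/2}$ itself, using $|\nabla a_R(x)| \lesssim a_R(x)^{1/2}$, to get a differential inequality $|\frac{d}{dt}V_{a_R}| \lesssim V_{a_R}^{1/2}\|\nabla u\|_{L^2}$, hence $\frac{d}{dt}V_{a_R}^{1/2} \lesssim \|\nabla u(t)\|_{L^2}$, which \emph{is} $R$-uniform on any interval where $\|\nabla u\|_{L^2}$ is bounded. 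Integrating and sending $R\to\infty$ then yields $\|xu(t)\|_{L^2} < \infty$ with a locally bounded estimate, after which the second-derivative identity \eqref{standard virial identity} follows by passing to the limit in the truncated version of \eqref{second derivative virial potential application} (here one also uses that the nonlinear integrand $|x|^{-b}|u|^{\alpha+2}$ is integrable, so dominated convergence applies as $a_R \uparrow |x|^2$), and continuity of $t\mapsto \|xu(t)\|_{L^2}$ comes from the continuity of the right-hand side of \eqref{first derivative viral potential}. I would present the weight computation in full and state the approximation argument concisely, referring to the analogous classical argument for $b=0$ (e.g. Cazenave \cite{Cazenave}) for the routine details.
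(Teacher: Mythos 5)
Your proposal is correct and follows essentially the same route as the paper: the identity is obtained by substituting $a(x)=|x|^2$ into Corollary \ref{coro derivative virial potential} (your coefficient computation $-\frac{4d\alpha+8b}{\alpha+2}=-\frac{4(d\alpha+2b)}{\alpha+2}$ matches), and the persistence of finite variance plus continuity of $t\mapsto\|xu(t)\|_{L^2}$ is handled by the standard truncation/approximation argument, which the paper simply delegates to \cite[Proposition 6.5.1]{Cazenave}. Your sketch of that approximation step (the $R$-uniform differential inequality for $V_{a_R}^{1/2}$) is precisely the classical argument being cited, just spelled out in more detail than the paper chooses to.
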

\begin{proof}
The first claim follows from the standard approximation argument, we omit the proof and refer the reader to \cite[Proposition 6.5.1]{Cazenave} for more details. The identity $(\ref{standard virial identity})$ follows from Corollary $\ref{coro derivative virial potential}$ by taking $a(x)=|x|^2$.
\end{proof}
In order to prove the blowup for the focusing $\eqref{INLS}$ with radial data, we need localized virial estimates. To do so, we introduce a function $\theta: [0,\infty) \rightarrow [0,\infty)$ satisfying
\begin{align}
\theta(r) =\left\{\begin{array}{cl}
r^2 & \text{if } 0\leq r \leq 1, \\
\text{const.} &\text{if } r \geq 2, 
\end{array}
\right.
\quad \text{and} \quad \theta''(r) \leq 2 \quad \text{for }  r\geq 0.  \label{define theta}
\end{align}
Note that the precise constant here is not important. For $R>1$, we define the radial function
\begin{align}
\varphi_R(x) = \varphi_R(r):=R^2 \theta(r/R), \quad r=|x|. \label{define rescaled varphi}
\end{align}
It is easy to see that
\begin{align}
2-\varphi''_R(r) \geq 0, \quad 2-\frac{\varphi'_R(r)}{r} \geq 0, \quad 2d- \Delta \varphi_R(x) \geq 0. \label{property rescaled varphi}
\end{align}
\begin{lem} \label{lem localized virial identity}
Let $d\geq 2, 0<b<2, 0<\alpha \leq 4, R>1$ and $\varphi_R$ be as in $(\ref{define rescaled varphi})$. Let $u: I\times \R^d \rightarrow \C$ be a radial solution to the focusing $\eqref{INLS}$. Then for any $\ep>0$ and any $t\in I$,
\begin{align}
\begin{aligned}
\frac{d^2}{dt^2}V_{\varphi_R} (t) &\leq 8\|\nabla u(t)\|^2_{L^2} - \frac{4(d\alpha+2b)}{\alpha+2}\int |x|^{-b}|u(t,x)|^{\alpha+2}dx \\ 
& \mathrel{\phantom{\leq}} + \left\{
\begin{array}{cl}
O\left( R^{-2} + R^{-[2(d-1) +b]} \|\nabla u(t)\|^2_{L^2} \right) &\text{if } \alpha =4, \\
O \left( R^{-2} + \ep^{-\frac{\alpha}{4-\alpha}} R^{-\frac{2[(d-1)\alpha+2b]}{4-\alpha}} + \ep \|\nabla u(t)\|^2_{L^2} \right) &\text{if } \alpha<4.
\end{array}
\right. 
\end{aligned}
\label{localized virial identity}
\end{align}
\end{lem}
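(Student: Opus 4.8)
The plan is to apply Corollary \ref{coro derivative virial potential} with the radial cutoff $a(x) = \varphi_R(x)$ and then estimate each of the four resulting terms, using that $\varphi_R$ agrees with $|x|^2$ on the ball $\{|x|\leq R\}$ so that the errors are all supported in the annulus $\{R\leq |x|\leq 2R\}$. On $\{|x|\leq R\}$ we have $\nabla\varphi_R = 2x$, $\partial^2_{jk}\varphi_R = 2\delta_{jk}$, $\Delta\varphi_R = 2d$, and $\Delta^2\varphi_R = 0$, so the contribution from that region reproduces exactly the standard virial identity $(\ref{standard virial identity})$; I would write
\begin{equation*}
\frac{d^2}{dt^2}V_{\varphi_R}(t) = 8\|\nabla u(t)\|^2_{L^2} - \frac{4(d\alpha+2b)}{\alpha+2}\int |x|^{-b}|u|^{\alpha+2}\,dx + \mathcal{R}(t),
\end{equation*}
where $\mathcal{R}(t)$ collects the differences $\int(\Delta^2\varphi_R)|u|^2$, the gradient term with $\partial^2_{jk}\varphi_R - 2\delta_{jk}$, the term with $\Delta\varphi_R - 2d$, and the term $\frac{4}{\alpha+2}\int\nabla\varphi_R\cdot\nabla(|x|^{-b})|u|^{\alpha+2}$ (the last one being genuinely present rather than a difference, but also concentrated where $|x|\gtrsim 1$; one must be slightly careful that for $|x|\le 1$ it is controlled, but $\nabla\varphi_R\cdot\nabla(|x|^{-b}) = 2x\cdot(-b|x|^{-b-2}x) = -2b|x|^{-b}\le 0$ there, so it only helps, and for $R\le|x|\le 2R$ it is $O(R^{-b})$ times $|x|^{-b}|u|^{\alpha+2}$ on the annulus). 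The bulk of the work is bounding $\mathcal{R}(t)$.

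The two easy pieces are the bi-Laplacian term and the Hessian/Laplacian terms. Since $\theta'' \le 2$ and $\theta$ is bounded, $\varphi_R$ has bounded $C^4$ norm after the obvious scaling: $\|\Delta^2\varphi_R\|_{L^\infty} = O(R^{-2})$, so $|\int\Delta^2\varphi_R|u|^2| = O(R^{-2}\|u\|_{L^2}^2) = O(R^{-2})$ by mass conservation. The Hessian difference $\partial^2_{jk}\varphi_R - 2\delta_{jk}$ is supported on $\{|x|\ge R\}$ and bounded; by \eqref{property rescaled varphi} the combination $4\sum\partial^2_{jk}\varphi_R\re(\partial_k u\,\partial_j\bar u) - 8\|\nabla u\|_{L^2}^2$ is $\le 0$ pointwise on the radial function after writing it in polar coordinates as $4\int(\varphi_R''|\partial_r u|^2 + \frac{\varphi_R'}{r}|\slashed\nabla u|^2)$ minus $8\|\nabla u\|^2$, but for the upper bound I only need that its positive part is harmless — actually the cleanest route, the one the lemma's statement suggests, is to keep $+8\|\nabla u\|^2$ exactly and bound the remaining annular gradient piece. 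One must be honest here: on the annulus the gradient term with coefficient $\varphi_R'' - 2 \le 0$ and $\varphi_R'/r - 2 \le 0$ contributes a nonpositive quantity, so it can simply be dropped from an upper bound; similarly $\Delta\varphi_R - 2d \le 0$ so $-\frac{2\alpha}{\alpha+2}\int(\Delta\varphi_R - 2d)|x|^{-b}|u|^{\alpha+2} \ge 0$ and must be kept — this is where the genuine nonlinear error enters.

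Thus the heart of the estimate is controlling $\int_{R\le|x|\le 2R}|x|^{-b}|u|^{\alpha+2}\,dx$, which appears with coefficient $O(1)$ (from the $\Delta\varphi_R - 2d$ term and the $\nabla\varphi_R\cdot\nabla(|x|^{-b})$ term, both $O(1)$ on the annulus). On the annulus $|x|^{-b}\le R^{-b}$, and I would invoke the radial Strauss-type estimate $\|u\|_{L^\infty(|x|\ge R)}^2 \lesssim R^{-(d-1)}\|u\|_{L^2}\|\nabla u\|_{L^2}$ to write
\begin{equation*}
\int_{|x|\ge R}|x|^{-b}|u|^{\alpha+2}\,dx \lesssim R^{-b}\,\|u\|_{L^\infty(|x|\ge R)}^\alpha \int_{|x|\ge R}|u|^2\,dx \lesssim R^{-b}\big(R^{-(d-1)}\|\nabla u\|_{L^2}\big)^{\alpha/2}\|u\|_{L^2}^{2+\alpha/2},
\end{equation*}
using mass conservation to absorb the $\|u\|_{L^2}$ powers, giving $O\big(R^{-b - (d-1)\alpha/2}\|\nabla u\|_{L^2}^{\alpha/2}\big)$. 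When $\alpha = 4$ this is exactly $O(R^{-[2(d-1)+b]}\|\nabla u\|_{L^2}^2)$, matching the first branch of \eqref{localized virial identity}. When $\alpha < 4$, since $\alpha/2 < 2$, I apply Young's inequality $XY^{\alpha/2} \le \ep Y^2 + C\ep^{-\frac{\alpha}{4-\alpha}}X^{\frac{4}{4-\alpha}}$ with $X = R^{-b-(d-1)\alpha/2}$ and $Y = \|\nabla u\|_{L^2}$, which produces exactly the term $\ep\|\nabla u\|_{L^2}^2 + C\ep^{-\frac{\alpha}{4-\alpha}}R^{-\frac{2[(d-1)\alpha+2b]}{4-\alpha}}$ of the second branch (the exponent $\frac{4}{4-\alpha}(b+(d-1)\alpha/2) = \frac{(d-1)\alpha + 2b}{4-\alpha}$, and the factor $2$ comes from squaring inside — I would double-check the bookkeeping of powers of $2$ here). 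The main obstacle, and the one I would spend the most care on, is the Strauss-type radial decay estimate on the exterior region and making sure the power of $R$ is sharp enough: the restriction $\alpha \le 4$ in the hypotheses is precisely what makes Young's inequality applicable with the subquadratic exponent $\alpha/2$, and this is the technical bottleneck the remark in the paper alludes to; one also has to confirm that the constant in the Strauss estimate is uniform in $R > 1$, which follows from the usual $\int_R^\infty \partial_r(|u|^2 r^{d-1})\,dr$ argument.
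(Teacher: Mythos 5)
Your proposal is correct and follows essentially the same route as the paper: reduce to the standard virial identity plus exterior error terms, drop the nonpositive gradient corrections using $\varphi_R''\le 2$ and $\varphi_R'/r\le 2$ for radial $u$, bound $\int_{|x|>R}|x|^{-b}|u|^{\alpha+2}\,dx$ by the $s=\tfrac12$ radial Sobolev (Strauss) embedding to get $R^{-[(d-1)\alpha/2+b]}\|\nabla u\|_{L^2}^{\alpha/2}$, and finish with Young's inequality with exponents $4/\alpha$ and $4/(4-\alpha)$ when $\alpha<4$. The only inaccuracy is the claim that the errors live in the annulus $R\le|x|\le 2R$ (terms like $2d-\Delta\varphi_R$ persist on $|x|\ge 2R$), but since your actual estimates are carried out on all of $\{|x|>R\}$ this is harmless.
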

\begin{rem} \label{rem localized virial identity}
\begin{itemize}
\item[1.] The condition $d\geq 2$ comes from the radial Sobolev embedding. This is due to the fact that radial functions in dimension 1 do not have any decaying property. The restriction $0<\alpha \leq 4$ comes from the Young inequality below.
\item[2.] If we consider $\alpha_\star \leq \alpha \leq \alpha^\star$, then there is a restriction on the validity of $\alpha$ in 2D. More precisely, we need $\alpha_\star \leq \alpha \leq 4$ when $d=2$.
\end{itemize}
\end{rem}
\noindent \textit{Proof of Lemma $\ref{lem localized virial identity}$.} We apply $(\ref{second derivative virial potential application})$ for $a(x) =\varphi_R(x)$ to get
\begin{align*}
\frac{d^2}{dt^2} V_{\varphi_R}(t) =& -\int \Delta^2 \varphi_R |u(t)|^2  dx  +  4 \sum_{j,k=1}^d \int \partial^2_{jk} \varphi_R \re{(\partial_k u(t) \partial_j \overline{u}(t))} dx  \\
&-\frac{2\alpha}{\alpha+2} \int \Delta \varphi_R |x|^{-b} |u(t)|^{\alpha+2} dx +\frac{4}{\alpha+2} \int \nabla\varphi_R \cdot \nabla(|x|^{-b}) |u(t)|^{\alpha+2}dx. 
\end{align*}
Since $\varphi_R(x)=|x|^2$ for $|x|\leq R$, we use $(\ref{standard virial identity})$ to have
\begin{align}
\frac{d^2}{dt^2}V_{\varphi_R} (t)&=8\|\nabla u(t)\|^2_{L^2} - \frac{4(d\alpha+2b)}{\alpha+2}\int |x|^{-b}|u(t)|^{\alpha+2} dx \nonumber \\
&\mathrel{\phantom{=}} - 8\|\nabla u(t)\|^2_{L^2(|x|>R)} + \frac{4(d\alpha+2b)}{\alpha+2} \int_{|x|>R}|x|^{-b}|u(t)|^{\alpha+2}dx \label{localized virial estimate proof} \\
&\mathrel{\phantom{=}} -\int_{|x|>R} \Delta^2 \varphi_R |u(t)|^2  dx  +  4 \sum_{j,k=1}^d \int_{|x|>R} \partial^2_{jk} \varphi_R \re{(\partial_k u(t) \partial_j \overline{u}(t))} dx \nonumber \\
&\mathrel{\phantom{=}} -\frac{2\alpha}{\alpha+2}\int_{|x|>R} \Delta \varphi_R |x|^{-b} |u(t)|^{\alpha+2} dx+\frac{4}{\alpha+2} \int_{|x|>R} \nabla \varphi_R \cdot \nabla(|x|^{-b}) |u(t)|^{\alpha+2} dx. \nonumber
\end{align}
Since $|\Delta \varphi_R |\lesssim 1, |\Delta^2 \varphi_R| \lesssim R^{-2}$ and $|\nabla \varphi_R \cdot \nabla(|x|^{-b})| \lesssim |x|^{-b}$, we have
\begin{align*}
\frac{d^2}{dt^2}V_{\varphi_R} (t)= 8\|\nabla u(t)\|^2_{L^2} &- \frac{4(d\alpha+2b)}{\alpha+2}\int |x|^{-b}|u(t)|^{\alpha+2} dx  \\
&+ 4\sum_{j,k=1}^d \int_{|x|>R} \partial^2_{jk} \varphi_R \re{(\partial_k u(t) \partial_j \overline{u}(t))} dx - 8\|\nabla u(t)\|^2_{L^2(|x|>R)} \\
& + O \Big( \int_{|x|> R} R^{-2} |u(t)|^2 + |x|^{-b} |u(t)|^{\alpha+2} dx \Big).
\end{align*}
Using $(\ref{property rescaled varphi})$ and the fact that
\[
\partial^2_{jk} = \Big(\frac{\delta_{jk}}{r}-\frac{x_j x_k}{r^3}\Big) \partial_r + \frac{x_j x_k}{r^2} \partial^2_r,
\]
we see that
\[
\sum_{j,k=1}^d \partial^2_{jk}\varphi_R \partial_k u \partial_j \overline{u} = \varphi''_R(r) |\partial_r u|^2 \leq 2 |\partial_r u|^2 = 2|\nabla u|^2.
\]
Therefore
\[
4\sum_{j,k=1}^d \int_{|x|>R} \partial^2_{jk} \varphi_R \re{(\partial_k u (t) \partial_j \overline{u}(t))} dx - 8\|\nabla u(t)\|^2_{L^2(|x|>R)} \leq 0.
\]
The conservation of mass then implies
\begin{align*}
\frac{d^2}{dt^2} V_{\varphi_R}(t) \leq 8\|\nabla u(t)\|^2_{L^2} - \frac{4(d\alpha+2b)}{\alpha+2}\int |x|^{-b}|u(t)|^{\alpha+2} dx + O \Big( R^{-2} + \int_{|x|>R}|x|^{-b} |u(t)|^{\alpha+2} dx \Big).
\end{align*}
It remains to bound $\int_{|x|>R}|x|^{-b} |u(t)|^{\alpha+2} dx$. To do this, we recall the following radial Sobolev embedding (\cite{Strauss, ChoOzawa}).
\begin{lem}[Radial Sobolev embedding \cite{Strauss, ChoOzawa}]
Let $d\geq 2$ and $\frac{1}{2}\leq s <1$. Then for any radial function $f$, 
\begin{align}
\sup_{x\ne 0} |x|^{\frac{d-2s}{2}}|f(x)| \leq C(d,s) \|f\|^{1-s}_{L^2} \|f\|^s_{\dot{H}^1}. \label{usual radial sobolev embedding}
\end{align}
Moreover, the above inequality also holds for $d\geq 3$ and $s=1$.
\end{lem}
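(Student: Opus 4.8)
The plan is to pass to the radial variable and reduce matters to a one-dimensional integration by parts. By density it suffices to prove $(\ref{usual radial sobolev embedding})$ for smooth, rapidly decreasing radial $f$; writing $f(x)=g(|x|)$ and $\omega:=|\mathbb{S}^{d-1}|$, we have $\|f\|^2_{L^2}=\omega\int_0^\infty r^{d-1}|g(r)|^2\,dr$ and $\|f\|^2_{\dot{H}^1}=\omega\int_0^\infty r^{d-1}|g'(r)|^2\,dr$, so the claim is equivalent to
\[
\sup_{r>0} r^{d-2s}\,|g(r)|^2 \;\lesssim_{d,s}\; \Big(\int_0^\infty r^{d-1}|g|^2\,dr\Big)^{1-s}\Big(\int_0^\infty r^{d-1}|g'|^2\,dr\Big)^{s}.
\]

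First I would establish the endpoint $s=\tfrac{1}{2}$, which is valid in every dimension $d\geq 1$: from $r^{d-1}|g(r)|^2=-\int_r^\infty\partial_\rho\big(\rho^{d-1}|g(\rho)|^2\big)\,d\rho$ one discards the term $(d-1)\rho^{d-2}|g|^2$, which enters with a good sign, obtains $r^{d-1}|g(r)|^2\leq 2\int_0^\infty\rho^{d-1}|g||g'|\,d\rho$, and closes by Cauchy--Schwarz. The same computation run from $r^{d-2s}|g(r)|^2$ works for any admissible $s$: since $d-2s>0$ (because $s<1\leq d/2$), the lower order term is again favourable, so $r^{d-2s}|g(r)|^2\leq 2\int_0^\infty\rho^{d-2s}|g||g'|\,d\rho\leq 2\big(\int_0^\infty\rho^{d-4s+1}|g|^2\big)^{1/2}\big(\int_0^\infty\rho^{d-1}|g'|^2\big)^{1/2}$. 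Hence everything reduces to the weighted one-dimensional estimate
\[
\int_0^\infty\rho^{d-4s+1}|g|^2\,d\rho\;\lesssim_{d,s}\;\Big(\int_0^\infty\rho^{d-1}|g|^2\,d\rho\Big)^{2-2s}\Big(\int_0^\infty\rho^{d-1}|g'|^2\,d\rho\Big)^{2s-1},
\]
whose substitution back into the previous line collapses the exponents to exactly $1-s$ and $s$.

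For $d\geq 3$ this weighted estimate is elementary: the algebraic identity $\rho^{d-4s+1}=(\rho^{d-1})^{2-2s}(\rho^{d-3})^{2s-1}$ together with H\"older in $\rho$ gives $\int\rho^{d-4s+1}|g|^2\leq(\int\rho^{d-1}|g|^2)^{2-2s}(\int\rho^{d-3}|g|^2)^{2s-1}$, after which the Hardy inequality $\int_0^\infty\rho^{d-3}|g|^2\,d\rho\lesssim_d\int_0^\infty\rho^{d-1}|g'|^2\,d\rho$ (equivalently $\int|x|^{-2}|f|^2\lesssim\int|\nabla f|^2$, valid precisely for $d\geq 3$) finishes it; when $s=1$ this degenerates to Hardy itself, giving the $s=1$ addendum.

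The main obstacle is $d=2$ with $\tfrac{1}{2}<s<1$, where $d-3=-1$ and both Hardy and the H\"older step above break down. Here the weighted estimate reads $\int_0^\infty\rho^{3-4s}|g|^2\,d\rho\lesssim(\int_0^\infty\rho|g|^2)^{2-2s}(\int_0^\infty\rho|g'|^2)^{2s-1}$, the exponent $3-4s$ lying in $(-1,1)$ exactly when $\tfrac{1}{2}<s<1$; I would prove it by a direct argument: after rescaling so that $\int_0^\infty\rho|g|^2=\int_0^\infty\rho|g'|^2$, split $(0,\infty)=(0,1)\cup(1,\infty)$, bound the outer piece by $\rho^{3-4s}\leq\rho$, and estimate the inner piece by a finite iteration of the integration-by-parts/Cauchy--Schwarz step --- each pass replaces the weight exponent $\gamma$ by $2\gamma+1$, so after finitely many passes the exponent is $\geq 0$ and the estimate closes cleanly --- and it is precisely the condition $3-4s>-1$ (that is, $s<1$) that keeps this iteration running. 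Alternatively, and uniformly for all $d\geq 2$ and $\tfrac{1}{2}<s<1$, one may invoke the known fractional radial Sobolev embedding $\sup_{x\neq 0}|x|^{(d-2s)/2}|f(x)|\lesssim_{d,s}\|f\|_{\dot{H}^s}$ for radial $f$, together with the interpolation bound $\|f\|_{\dot{H}^s}\leq\|f\|^{1-s}_{L^2}\|\nabla f\|^s_{L^2}$ (immediate from Plancherel and H\"older in frequency).
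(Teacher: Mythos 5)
The paper does not prove this lemma at all --- it is imported verbatim from Strauss and Cho--Ozawa --- so any comparison is between your self-contained argument and a citation. Your argument is essentially correct and is in fact the standard way these inequalities are derived. The reduction to the radial variable, the sign observation that $d-2s>0$ lets you drop the lower-order term in $-\int_r^\infty\partial_\rho(\rho^{d-2s}|g|^2)\,d\rho$, the weighted Cauchy--Schwarz, and the $d\geq 3$ closure via the factorization $\rho^{d-4s+1}=(\rho^{d-1})^{2-2s}(\rho^{d-3})^{2s-1}$, H\"older, and Hardy (degenerating to Hardy itself at $s=1$) are all clean and correct. The only place where you are sketching rather than proving is the inner piece in $d=2$: the iteration step should be written as an integration by parts $\int_0^1\rho^{\gamma}|g|^2\,d\rho=\frac{1}{\gamma+1}|g(1)|^2-\frac{2}{\gamma+1}\int_0^1\rho^{\gamma+1}\reemph{(\bar g g')}\,d\rho$, whose boundary term at $\rho=1$ must be absorbed using the already-established $s=\tfrac12$ bound and whose boundary term at $0$ vanishes because $\gamma+1>0$ is preserved along the iteration ($\gamma_{n}+1=2^{n}(\gamma_0+1)$, which also shows the exponent reaches $[1,\infty)$ in finitely many steps so the recursion closes against the normalization $\int\rho|g|^2=\int\rho|g'|^2$). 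These details are routine but should be recorded if you want the $d=2$ case to be genuinely self-contained; otherwise your fallback --- the Cho--Ozawa bound $\sup_{x\ne0}|x|^{(d-2s)/2}|f|\lesssim\|f\|_{\dot H^s}$ for $\tfrac12<s<1$ combined with $\|f\|_{\dot H^s}\leq\|f\|_{L^2}^{1-s}\|f\|_{\dot H^1}^{s}$ --- is exactly the route the paper's citation intends, with the endpoint $s=\tfrac12$ covered separately by your Strauss computation. What your direct proof buys over the citation is transparency about where each hypothesis enters: $d\geq2$ for the decay of radial functions, $s<1$ (equivalently $d-2s>0$) for the sign of the discarded term, and $d\geq3$ for Hardy at $s=1$.
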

Using $(\ref{usual radial sobolev embedding})$ with $s=\frac{1}{2}$ and the conservation of mass, we estimate
\begin{align*}
\int_{|x|>R}|x|^{-b} |u(t)|^{\alpha+2} dx &\leq \Big(\sup_{|x|>R} |x|^{-b}|u(t,x)|^\alpha \Big)\|u(t)\|^2_{L^2} \\
&\lesssim R^{-\left[\frac{(d-1)\alpha}{2}+b\right]} \Big(\sup_{|x|>R} |x|^{\frac{d-1}{2}} |u(t,x)| \Big)^{\alpha} \|u(t)\|^2_{L^2} \\
&\lesssim R^{-\left[\frac{(d-1)\alpha}{2}+b\right]} \|\nabla u(t)\|^{\frac{\alpha}{2}}_{L^2} \|u(t)\|^{\frac{\alpha}{2}+2}_{L^2} \\
&\lesssim R^{-\left[\frac{(d-1)\alpha}{2}+b\right]} \|\nabla u(t)\|^{\frac{\alpha}{2}}_{L^2}.
\end{align*}
When $\alpha=4$, we are done. Let us consider $0<\alpha<4$. To do so, we recall the Young inequality: for $a, b$ non-negative real numbers and $p, q$ positive real numbers satisfying $\frac{1}{p}+\frac{1}{q}=1$, then for any $\ep>0$, $ab \lesssim \ep a^p + \ep^{-\frac{q}{p}} b^q$. Applying the Young inequality for $a=\|\nabla u(t)\|^{\frac{\alpha}{2}}_{L^2}, b= R^{-\left[\frac{(d-1)\alpha}{2}+b\right]}$ and $p=\frac{4}{\alpha}, q= \frac{4}{4-\alpha}$, we get for any $\ep>0$,
\[
R^{-\left[\frac{(d-1)\alpha}{2}+b\right]} \|\nabla u(t)\|^{\frac{\alpha}{2}}_{L^2} \lesssim \ep \|\nabla u(t)\|^2_{L^2} + \ep^{-\frac{\alpha}{4-\alpha}} R^{-\frac{2[(d-1)\alpha]+2b}{4-\alpha}}.
\]
Note that the condition $0<\alpha<4$ ensures $1<p, q<\infty$. The proof is complete.
\defendproof
\newline
\indent In the mass-critical case $\alpha=\alpha_\star$, we have the following refined version of Lemma $\ref{lem localized virial identity}$. The proof of this result is based on an argument of \cite{OgawaTsutsumi1} (see also \cite{BoulengerHimmelsbachLenzmann}). 
\begin{lem} \label{lem localized virial identity mass-critical}
Let $d\geq 2, 0<b<2,  R>1$ and $\varphi_R$ be as in $(\ref{define rescaled varphi})$. Let $u: I\times \R^d \rightarrow \C$ be a radial solution to the focusing mass-critical $\eqref{INLS}$, i.e. $\alpha=\alpha_\star$. Then for any $\ep>0$ and any $t\in I$,
\begin{align}
\begin{aligned}
\frac{d^2}{dt^2}V_{\varphi_R} (t)\leq 16E(u_0) &-2\int_{|x|>R} \Big(\chi_{1,R} - \frac{\ep}{d+2-b} \chi_{2,R}^{\frac{d}{2-b}} \Big) |\nabla u(t)|^2dx \\
&+ O \Big( R^{-2} +\ep R^{-2} + \ep^{-\frac{2-b}{2d-2+b}}R^{-2}\Big),
\end{aligned}
\label{localized virial identity mass-critical}
\end{align}
where
\begin{align}
\chi_{1,R} = 2(2-\varphi''_R), \quad \chi_{2,R}= (2-b)(2d-\Delta \varphi_R) +db \Big(2-\frac{\varphi'_R}{r}\Big). \label{define chi}
\end{align}
\end{lem}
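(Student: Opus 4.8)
The plan is to carry out an Ogawa--Tsutsumi type localization: apply the exact virial identity of Corollary~\ref{coro derivative virial potential} with $a=\varphi_R$, extract a clean $16E(u_0)$ from the inner ball $\{|x|\le R\}$, and absorb everything left on $\{|x|>R\}$ into a good negative gradient term by means of a \emph{weighted} radial Sobolev inequality. As in Lemmas~\ref{lem standard virial identity} and~\ref{lem localized virial identity}, it suffices to argue for smooth-in-time, Schwartz-in-space solutions and pass to the limit at the end; I will not repeat this reduction. I will use freely the mass-critical relations $d\alpha+2b=4$, $\alpha+2=\frac{2(d+2-b)}{d}$ and $\frac1\alpha=\frac{d}{2(2-b)}$, all consequences of $\alpha=\alpha_\star=\frac{4-2b}{d}$.

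First I would apply $(\ref{second derivative virial potential application})$ with $a=\varphi_R$ and split every integral at $|x|=R$. On $\{|x|\le R\}$ we have $\varphi_R(x)=|x|^2$, so the inner contribution is $8\int_{|x|\le R}|\nabla u|^2-\frac{4(d\alpha+2b)}{\alpha+2}\int_{|x|\le R}|x|^{-b}|u|^{\alpha+2}$; rewriting $\int_{|x|\le R}=\int_{\R^d}-\int_{|x|>R}$ and using $d\alpha+2b=4$ together with conservation of energy, this inner contribution equals
\[
16E(u_0)-8\|\nabla u(t)\|_{L^2(|x|>R)}^2+\frac{16}{\alpha+2}\int_{|x|>R}|x|^{-b}|u(t)|^{\alpha+2}\,dx .
\]
Hence $\frac{d^2}{dt^2}V_{\varphi_R}(t)=16E(u_0)+\mathcal R_1(t)+\mathcal R_2(t)$, where $\mathcal R_1$ gathers, on $\{|x|>R\}$, the $\Delta^2\varphi_R$ and $\partial^2_{jk}\varphi_R$ terms together with $-8\|\nabla u\|_{L^2(|x|>R)}^2$, and $\mathcal R_2$ gathers, on $\{|x|>R\}$, the $\Delta\varphi_R$ and $\nabla\varphi_R\cdot\nabla(|x|^{-b})$ terms together with $\frac{16}{\alpha+2}\int_{|x|>R}|x|^{-b}|u|^{\alpha+2}$. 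For $\mathcal R_1$ I use that, by radiality, $\sum_{j,k}\partial^2_{jk}\varphi_R\,\re{(\partial_k u\,\partial_j\overline u)}=\varphi''_R|\nabla u|^2$ (exactly as in the proof of Lemma~\ref{lem localized virial identity}) and that $|\Delta^2\varphi_R|\lesssim R^{-2}$ is supported in $\{R\le|x|\le 2R\}$, so conservation of mass gives $\mathcal R_1(t)=-2\int_{|x|>R}\chi_{1,R}|\nabla u(t)|^2\,dx+O(R^{-2})$. For $\mathcal R_2$, using $\nabla\varphi_R\cdot\nabla(|x|^{-b})=-b\,\frac{\varphi'_R}{r}\,|x|^{-b}$ and the mass-critical values above, a short computation shows that the weight multiplying $|x|^{-b}|u|^{\alpha+2}$ is $\frac{1}{d+2-b}\bigl(8d-(4-2b)\Delta\varphi_R-2bd\,\frac{\varphi'_R}{r}\bigr)$, which is exactly $\frac{2}{d+2-b}\chi_{2,R}$ by $(\ref{define chi})$; thus $\mathcal R_2(t)=\frac{2}{d+2-b}\int_{|x|>R}\chi_{2,R}|x|^{-b}|u(t)|^{\alpha+2}\,dx$. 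Everything so far is an exact identity.

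It then remains to estimate $\int_{|x|>R}\chi_{2,R}|x|^{-b}|u|^{\alpha+2}$. Here I would write $\chi_{2,R}|u|^{\alpha+2}=(\chi_{2,R}^{1/\alpha}|u|)^{\alpha}|u|^2$, factor out $\sup_{|x|>R}\bigl(|x|^{(d-1)/2}\chi_{2,R}^{1/\alpha}|u|\bigr)^{\alpha}$, and note that the remaining factor satisfies $\int_{|x|>R}|x|^{-\frac{2(d-1)+b}{d}}|u|^2\lesssim R^{-\frac{2(d-1)+b}{d}}$ by mass conservation (using $\frac{(d-1)\alpha}{2}+b=\frac{2(d-1)+b}{d}$). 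For the supremum I apply the radial Sobolev inequality $(\ref{usual radial sobolev embedding})$ with $s=\frac12$ to $f=\chi_{2,R}^{1/\alpha}u$ (which vanishes on $\{|x|\le R\}$): this gives $\sup|x|^{(d-1)/2}|f|\lesssim\|f\|_{L^2}^{1/2}\|\nabla f\|_{L^2}^{1/2}$, with $\|f\|_{L^2}\lesssim 1$ (mass conservation and $\chi_{2,R}\lesssim 1$) and $\|\nabla f\|_{L^2}^2\lesssim\int_{|x|>R}\chi_{2,R}^{d/(2-b)}|\nabla u|^2+O(R^{-2})$, the error coming from the Leibniz cross term $\chi_{2,R}^{1/\alpha-1}\nabla\chi_{2,R}\,u$ and controlled by the pointwise bound $|\nabla\chi_{2,R}|\lesssim R^{-1}\chi_{2,R}^{1/2}$ on $\{R\le|x|\le 2R\}$, which makes $\chi_{2,R}^{2/\alpha-2}|\nabla\chi_{2,R}|^2\lesssim R^{-2}$ (this is where $\alpha\le 2$, i.e.\ $d\ge 2$, is used). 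Altogether
\[
\int_{|x|>R}\chi_{2,R}|x|^{-b}|u|^{\alpha+2}\,dx\lesssim R^{-\frac{2(d-1)+b}{d}}\Bigl(\int_{|x|>R}\chi_{2,R}^{d/(2-b)}|\nabla u|^2\,dx+R^{-2}\Bigr)^{\frac{2-b}{2d}} .
\]
A last application of Young's inequality with conjugate exponents $p=\frac{2d}{2-b}$, $q=\frac{2d}{2d-2+b}$ — for which $\frac{2-b}{2d}\,p=1$, $\frac{2(d-1)+b}{d}\,q=2$, $q/p=\frac{2-b}{2d-2+b}$, and $1<p,q<\infty$ since $0<b<2$, $d\ge 2$ — yields, for every $\ep>0$,
\[
\frac{2}{d+2-b}\int_{|x|>R}\chi_{2,R}|x|^{-b}|u|^{\alpha+2}\,dx\le\frac{2\ep}{d+2-b}\int_{|x|>R}\chi_{2,R}^{d/(2-b)}|\nabla u|^2\,dx+O\bigl(\ep R^{-2}+\ep^{-\frac{2-b}{2d-2+b}}R^{-2}\bigr) .
\]
Substituting the formulas for $\mathcal R_1$ and $\mathcal R_2$ and merging the two gradient integrals produces exactly $(\ref{localized virial identity mass-critical})$.

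The step I expect to be the main obstacle is the weighted radial Sobolev estimate: obtaining precisely the weight $\chi_{2,R}^{d/(2-b)}$ on the gradient term forces the choice $f=\chi_{2,R}^{1/\alpha}u$, hence requires controlling $\nabla(\chi_{2,R}^{1/\alpha})$, and for the resulting Leibniz error to be genuinely $O(R^{-2})$ one needs $\chi_{2,R}$ to vanish to second order at $|x|=R$ (so that $|\nabla\chi_{2,R}|\lesssim R^{-1}\chi_{2,R}^{1/2}$ there), which holds provided the profile $\theta$ in $(\ref{define theta})$ is chosen smooth, and one needs $2/\alpha\ge 1$ so that $\chi_{2,R}^{2/\alpha-2}|\nabla\chi_{2,R}|^2$ remains bounded; both are guaranteed under the hypotheses $d\ge 2$, $0<b<2$. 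Everything else is bookkeeping with the mass-critical exponents and an application of the already-established radial Sobolev inequality $(\ref{usual radial sobolev embedding})$.
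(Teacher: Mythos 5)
Your proposal is correct and follows essentially the same route as the paper: the same splitting of the exact virial identity at $|x|=R$ with the algebraic identification of $\frac{2}{d+2-b}\chi_{2,R}$, the radial Sobolev inequality $(\ref{usual radial sobolev embedding})$ with $s=\tfrac12$ applied to $\chi_{2,R}^{d/(4-2b)}u$, and Young's inequality with the same exponents $p=\frac{2d}{2-b}$, $q=\frac{2d}{2d-2+b}$. Your extra remark that the Leibniz error requires $|\nabla\chi_{2,R}|\lesssim R^{-1}\chi_{2,R}^{1/2}$ (i.e.\ second-order vanishing of $\chi_{2,R}$ at $|x|=R$) is a correct elaboration of the bound $|\nabla(\chi_{2,R}^{d/(4-2b)})|\lesssim R^{-1}$ that the paper asserts without detail.
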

\begin{proof}
We first notice that
\[
\sum_{j,k} \partial^2_{jk} \varphi_R \partial_k u \partial_j \overline{u} = \varphi''_R |\partial_r u|^2, \quad \nabla\varphi_R \cdot \nabla(|x|^{-b}) = -b \frac{\varphi'_R}{r}|x|^{-b}.
\]
Using $(\ref{localized virial estimate proof})$ with $\alpha=\alpha_\star =\frac{4-2b}{d}$ and rewriting $\varphi''_R = 2 - (2-\varphi''_R), \frac{\varphi'_R}{r} = 2-\left(2-\frac{\varphi'_R}{r}\right)$ and $\Delta \varphi_R = 2d - (2d-\Delta \varphi_R)$, we have
\begin{align*}
\frac{d^2}{dt^2}V_{\varphi_R}(t)&= 16E(u(t)) -\int_{|x|>R} \Delta^2\varphi_R |u(t)|^2 dx - 4\int_{|x|>R} (2-\varphi''_R) |\partial_ru(t)|^2dx  \\
& \mathrel{\phantom{=16E(u(t))}} + \frac{4-2b}{d+2-b} \int_{|x|>R} (2d-\Delta\varphi_R) |x|^{-b} |u(t)|^{\frac{4-2b}{d}+2} dx \\
& \mathrel{\phantom{=16E(u(t))}} + \frac{2db}{d+2-b} \int_{|x|>R} \Big(2-\frac{\varphi'_R}{r}\Big) |x|^{-b} |u(t)|^{\frac{4-2b}{d}+2} dx \\
&\leq 16E(u(t)) + O(R^{-2}) - 2 \int_{|x|>R} \chi_{1,R} |\nabla u(t)|^2 dx \\
&\mathrel{\phantom{\leq 16E(u(t)) + O(R^{-2})}}+ \frac{2}{d+2-b}  \int_{|x|>R} \chi_{2,R} |x|^{-b} |u(t)|^{\frac{4-2b}{d}+2}dx,
\end{align*}
where $\chi_{1,R}$ and $\chi_{2,R}$ are as in $(\ref{define chi})$. Using the radial Sobolev embedding $(\ref{usual radial sobolev embedding})$ with $s=\frac{1}{2}$, the conservation of mass and the fact $|\chi_{2,R}| \lesssim 1$, we estimate
\begin{align*}
\int_{|x|>R} \chi_{2,R} |x|^{-b} |u(t)|^{\frac{4-2b}{d}+2} dx & = \int_{|x|>R} |x|^{-b} \Big|\chi_{2,R}^{\frac{d}{4-2b}} u(t)\Big|^{\frac{4-2b}{d}} |u(t)|^2 dx \\
&\leq \Big(\sup_{|x|>R} |x|^{-b} \Big|\chi^{\frac{d}{4-2b}}_{2,R}(x) u(t,x)\Big|^{\frac{4-2b}{d}}\Big) \|u(t)\|^2_{L^2} \\
&\leq R^{-\Big[\frac{(2-b)(d-1)}{d} + b\Big]} \Big( \sup_{|x|>R} |x|^{\frac{d-1}{2}} \Big|\chi^{\frac{d}{4-2b}}_{2,R}(x) u(t,x)\Big| \Big)^{\frac{4-2b}{d}} \|u(t)\|^2_{L^2}  \\
& \lesssim R^{-\Big[\frac{(2-b)(d-1)}{d} + b\Big]} \Big\| \nabla\Big(\chi_{2,R}^{\frac{d}{4-b}} u(t) \Big) \Big\|^{\frac{2-b}{d}}_{L^2} \Big\| \chi_{2,R}^{\frac{d}{4-b}} u(t)\Big\|^{\frac{2-b}{d}}_{L^2}\|u(t)\|^{2}_{L^2} \\
&\lesssim R^{-\Big[\frac{(2-b)(d-1)}{d} + b\Big]} \Big\| \nabla\Big(\chi_{2,R}^{\frac{d}{4-2b}} u(t) \Big) \Big\|^{\frac{2-b}{d}}_{L^2}.
\end{align*}
We next apply the Young inequality with $p= \frac{2d}{2-b}$ and $q=\frac{2d}{2d-2+b}$ to get for any $\ep>0$
\[
R^{-\Big[\frac{(2-b)(d-1)}{d} + b\Big]} \Big\| \nabla\Big(\chi_{2,R}^{\frac{d}{4-2b}} u(t) \Big) \Big\|^{\frac{2-b}{d}}_{L^2} \lesssim \ep \Big\| \nabla\Big(\chi_{2,R}^{\frac{d}{4-2b}} u(t) \Big) \Big\|^2_{L^2} +  \ep^{-\frac{2-b}{2d-2+b}} R^{-2}.
\] 
Moreover, using $(\ref{define theta}), (\ref{define rescaled varphi})$ and $(\ref{property rescaled varphi})$, it is easy to check that $|\nabla(\chi_{2,R}^{d/(4-2b)})| \lesssim R^{-1}$. Thus the conservation of mass implies
\[
\Big\| \nabla\Big(\chi_{2,R}^{\frac{d}{4-2b}} u(t) \Big) \Big\|^2_{L^2} \lesssim R^{-2} + \Big\|\chi_{2,R}^{\frac{d}{4-2b}} \nabla u(t) \Big\|^2_{L^2}.
\]
Combining the above estimates, we prove $(\ref{localized virial identity mass-critical})$.
\end{proof}
To prove the blowup in the 1D mass-critical case $\alpha=4-2b$, we need the following version of localized virial estimates due to \cite{OgawaTsutsumi2}. Let $\vartheta$ be a real-valued function in $W^{3,\infty}$ satisfying
\begin{align}
\vartheta(x)= \left\{
\begin{array}{c l c}
2x &\text{if}& 0\leq |x| \leq 1, \\
2[x-(x-1)^3] &\text{if}& 1<x \leq 1+1/\sqrt{3}, \\
2[x-(x+1)^3] &\text{if}& -(1+1/\sqrt{3}) \leq x <-1, \\
\vartheta'<0 &\text{if}& 1+1/\sqrt{3} <|x| <2,\\
0 &\text{if}& |x|\geq 2.
\end{array}
\right. \label{define vartheta 1D}
\end{align}  
Set 
\begin{align}
\theta(x) = \int_0^x \vartheta(s) ds. \label{define theta 1D}
\end{align}
\begin{lem} \label{lem virial estimate 1D}
Let $0<b<1$ and $\theta$ be as in $(\ref{define theta 1D})$. Let $u: I \times \R \rightarrow \C$ be a solution to the focusing mass-critical $\eqref{INLS}$, i.e. $\alpha=4-2b$. There exists $a_0>0$ such that if 
\begin{align}
\|u(t)\|_{L^2(|x|>1)} \leq a_0, \label{condition a_0}
\end{align}
for any $t\in I$, then there exists $C>0$ such that
\begin{align}
\frac{d^2}{dt^2} V_\theta(t) \leq 16E(u_0) + C(1+N)^{2-b} \|u(t)\|^{6-2b}_{L^2(|x|>1)} + N \|u(t)\|^2_{L^2(|x|>1)}, \label{virial estimate 1D}
\end{align}
for any $t\in I$, where $N:= \|\partial_x \vartheta\|_{L^\infty}+\|\partial^2_x \vartheta\|_{L^\infty} + \|\partial^3_x \vartheta\|_{L^\infty}$.
\end{lem}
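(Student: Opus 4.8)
The plan is to apply Corollary~\ref{coro derivative virial potential} with $a=\theta$ (here $d=1$, so $\Delta=\partial_x^2$). Since $\theta(x)=x^2$ for $|x|\le1$ — where $\theta''=2$, $\theta''''=0$ and $\theta'\,\partial_x(|x|^{-b})=2x\,\partial_x(|x|^{-b})=-2b|x|^{-b}$ — the integrand produced by Corollary~\ref{coro derivative virial potential} equals $8|\partial_x u(t)|^2-\tfrac{4\alpha+8b}{\alpha+2}|x|^{-b}|u(t)|^{\alpha+2}$ on $\{|x|\le1\}$, which for the mass-critical exponent $\alpha=4-2b$ (so $4\alpha+8b=16$) is $8|\partial_x u(t)|^2-\tfrac{16}{\alpha+2}|x|^{-b}|u(t)|^{\alpha+2}$; its integral over $\R$ is $16E(u(t))=16E(u_0)$ by the definition of the energy and its conservation. (This is the one-dimensional mass-critical form of $\frac{d^2}{dt^2}\|xu\|_{L^2}^2=16E(u_0)$ behind Lemma~\ref{lem standard virial identity}, but used here only through its integrand, so no moment hypothesis on $u_0$ is required — consistent with the present lemma.) Adding and subtracting this integrand over $\{|x|>1\}$, and justifying the identity for $H^1$ solutions by the usual approximation argument (note $V_\theta$ is automatically finite because $\theta$ is bounded), we obtain
\begin{align*}
\frac{d^2}{dt^2}V_\theta(t)=16E(u_0)+\mathrm{I}+\mathrm{II}+\mathrm{III},
\end{align*}
with
\begin{align*}
\mathrm{I}&=-\int_{|x|>1}\theta''''|u(t)|^2\,dx,\qquad \mathrm{II}=4\int_{|x|>1}(\theta''-2)|\partial_x u(t)|^2\,dx,\\
\mathrm{III}&=\int_{|x|>1}\Big[\tfrac{2\alpha}{\alpha+2}(2-\theta'')|x|^{-b}+\tfrac{4}{\alpha+2}(\theta'-2x)\,\partial_x(|x|^{-b})\Big]|u(t)|^{\alpha+2}\,dx.
\end{align*}

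For $\mathrm{I}$, since $\theta''''=\partial_x^3\vartheta$ and $|\partial_x^3\vartheta|\le N$, we have $\mathrm{I}\le N\|u(t)\|_{L^2(|x|>1)}^2$, the last term of~\eqref{virial estimate 1D}. For $\mathrm{II}$, the sign conditions in~\eqref{define vartheta 1D} give $\theta''=\vartheta'\le2$ on $\{|x|\ge1\}$, so $\mathrm{II}\le0$; more precisely $\theta''-2=-6(|x|-1)^2$ on $\{1<|x|<1+1/\sqrt3\}$ and $\theta''-2\le-2$ on $\{|x|\ge1+1/\sqrt3\}$, whence
\begin{align*}
\mathrm{II}\le-24\int_{1<|x|<1+1/\sqrt3}(|x|-1)^2|\partial_x u(t)|^2\,dx-8\int_{|x|\ge1+1/\sqrt3}|\partial_x u(t)|^2\,dx=:-24W(t)^2-8P(t)^2.
\end{align*}
These two negative contributions are what will absorb $\mathrm{III}$. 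A direct computation with the explicit form of $\vartheta$ in~\eqref{define vartheta 1D} on $\{1<|x|<1+1/\sqrt3\}$ shows that the bracket in $\mathrm{III}$ vanishes there at $|x|=1$ and is $O_b\big((|x|-1)^2\big)$, while on $\{|x|\ge1+1/\sqrt3\}$ it is $\lesssim(1+N)|x|^{-b}\le1+N$. Splitting $\mathrm{III}=\mathrm{III}_A+\mathrm{III}_B$ according to these regions,
\begin{align*}
\mathrm{III}_A\lesssim_b\int_{1<|x|<1+1/\sqrt3}(|x|-1)^2|u(t)|^{\alpha+2}\,dx,\qquad \mathrm{III}_B\lesssim(1+N)\int_{|x|\ge1+1/\sqrt3}|u(t)|^{\alpha+2}\,dx.
\end{align*}

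For $\mathrm{III}_B$: since $u(t)\in H^1(\R)$ decays at infinity, $\|u(t)\|_{L^\infty(|x|>1+1/\sqrt3)}^2\le2\|u(t)\|_{L^2(|x|>1+1/\sqrt3)}P(t)$, and using $\alpha/2=2-b$ we get $\mathrm{III}_B\lesssim(1+N)\|u(t)\|_{L^2(|x|>1)}^{4-b}P(t)^{2-b}$; one application of Young's inequality (exponents $\tfrac{2}{2-b}$ and $\tfrac{2}{b}$) absorbs a small multiple of $P(t)^2$ into the second term of the bound for $\mathrm{II}$ and leaves a term $\lesssim_{b,N}\|u(t)\|_{L^2(|x|>1)}^{(8-2b)/b}$. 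For $\mathrm{III}_A$ the point is that only the \emph{degenerate} quantity $W(t)^2$ is available, so in place of a crude $L^\infty$ bound we use the weighted pointwise estimate
\begin{align*}
\sup_{1<|x|<1+1/\sqrt3}(|x|-1)|u(t,x)|^2\le\|u(t)\|_{L^2(|x|>1)}^2+2\,W(t)\,\|u(t)\|_{L^2(|x|>1)},
\end{align*}
which follows by integrating $\partial_x\big((|x|-1)|u(t)|^2\big)$ from $|x|=1$ and applying Cauchy--Schwarz. Combined with the identity $(|x|-1)^2|u|^{\alpha+2}=(|x|-1)^b\big[(|x|-1)|u|^2\big]^{2-b}|u|^2$ on $\{1<|x|<1+1/\sqrt3\}$, this yields $\mathrm{III}_A\lesssim_b\|u(t)\|_{L^2(|x|>1)}^{6-2b}+W(t)^{2-b}\|u(t)\|_{L^2(|x|>1)}^{4-b}$, and one more Young inequality absorbs a small multiple of $W(t)^2$ into the first term of the bound for $\mathrm{II}$. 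After these absorptions every remaining term is a nonnegative power $\|u(t)\|_{L^2(|x|>1)}^{\gamma}$ with $\gamma\ge6-2b$; since $\tfrac{8-2b}{b}-(6-2b)=\tfrac{2(b-2)^2}{b}>0$ for $0<b<1$, choosing $a_0=a_0(b,N)$ small enough and using~\eqref{condition a_0} converts each excess power of $\|u(t)\|_{L^2(|x|>1)}$ (and each excess power of $1+N$ generated by the Young inequalities) into the desired $C(1+N)^{2-b}\|u(t)\|_{L^2(|x|>1)}^{6-2b}$. Together with the bounds for $\mathrm{I}$ and $\mathrm{II}$ this is~\eqref{virial estimate 1D}.

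The main obstacle is $\mathrm{III}_A$. On the slab $\{1<|x|<1+1/\sqrt3\}$ the negative quadratic term inherited from $\mathrm{II}$ degenerates like $(|x|-1)^2$ at $|x|=1$, so a naive Sobolev estimate — which would produce an $\|\partial_x u(t)\|_{L^2(|x|>1)}$ that cannot be absorbed anywhere — must be replaced by the weighted bound above, whose gradient term carries exactly the matching weight $(|x|-1)^2$; the fact that $\vartheta$ is the explicit cubic on this slab is precisely what makes the $\mathrm{III}$-integrand vanish to order $(|x|-1)^2$ there, so that the two weights agree. The only other point needing attention is bookkeeping of how the constants depend on $N$, so that the final constant has the form $C(1+N)^{2-b}$; this is what forces $a_0$ to be allowed to depend on $N$.
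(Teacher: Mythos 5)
Your proof is correct, and while it follows the paper's overall strategy — apply the virial identity with $a=\theta$, extract $16E(u_0)$ from the region $\{|x|\le 1\}$, and absorb the exterior nonlinear term into the negative, degenerately weighted gradient term coming from $\theta''-2\le 0$ — the key technical step is executed by a genuinely different route. The paper invokes the Ogawa--Tsutsumi weighted one-dimensional Gagliardo--Nirenberg inequality with weight $\rho=\chi_2^{1/(4-2b)}$ on all of $\{|x|>1\}$ at once, which reduces matters to the pointwise inequality $\chi_1-c\,\chi_2\|u(t)\|_{L^2(|x|>1)}^{4-b}\ge 0$, verified separately on the slab $1<|x|\le 1+1/\sqrt{3}$ and on $|x|>1+1/\sqrt{3}$. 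You instead split the nonlinear term into these two regions from the outset and use two elementary pointwise estimates: a plain Agmon bound on the outer region, and on the slab the weighted bound $\sup\,(|x|-1)|u|^2\le\|u\|_{L^2(|x|>1)}^2+2W\|u\|_{L^2(|x|>1)}$, whose gradient factor $W$ carries exactly the weight $(|x|-1)^2$ supplied by the negative term; Young's inequality then performs the absorption. Both arguments hinge on the identical structural fact that the coefficients of the gradient term and of the nonlinear term vanish to the same order $(|x|-1)^2$ at $|x|=1$, thanks to the explicit cubic choice of $\vartheta$. Your version is more self-contained (it avoids citing the weighted Gagliardo--Nirenberg lemma of Ogawa--Tsutsumi) at the cost of slightly heavier bookkeeping: your Young inequalities produce terms of size $(1+N)^{2/b}\|u\|_{L^2(|x|>1)}^{(8-2b)/b}$ that must be converted back to $C(1+N)^{2-b}\|u\|_{L^2(|x|>1)}^{6-2b}$ using the smallness of $a_0$, which therefore depends on $N$ --- but this is harmless since $\vartheta$ is fixed, and the paper's own smallness threshold in the outer region likewise depends on the bounds for $\vartheta$.
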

\begin{proof}
We apply $(\ref{second derivative virial potential application})$ with $a(x)=\theta(x)$ to get
\begin{align*}
\frac{d^2}{dt^2}V_\theta(t) &= -\int \partial^4_x \theta |u(t)|^2 dx + 4\int \partial^2_x \theta |\partial_x u(t)|^2 dx -\frac{4-2b}{3-b}\int \partial^2_x \theta |x|^{-b} |u(t)|^{6-2b} dx  \\
&\mathrel{\phantom{= -\int \partial^4_x \theta |u(t)|^2 dx + 4\int \partial^2_x \theta |\partial_x u(t)|^2 dx}}+ \frac{2}{3-b} \int \partial_x \theta \partial_x(|x|^{-b}) |u(t)|^{6-2b} dx.
\end{align*}
Since $\theta(x)=x^2$ on $|x|\leq 1$, the definition of energy implies
\begin{align}
\frac{d^2}{dt^2}V_\theta(t) &= 16E(u(t)) -\int_{|x|>1} \partial^4_x \theta |u(t)|^2 dx - 4\int_{|x|>1} (2-\partial^2_x\theta) |\partial_x u(t)|^2 dx \nonumber \\
&\mathrel{\phantom{=}}+\frac{4-2b}{3-b}\int_{|x|>1} (2-\partial^2_x \theta) |x|^{-b} |u(t)|^{6-2b} dx +\frac{2b}{3-b} \int_{|x|>1}\Big(2-\frac{\partial_x \theta}{x}\Big) |x|^{-b}|u(t)|^{6-2b}dx \nonumber \\
&= 16E(u_0) -\int_{|x|>1} \partial^4_x \theta |u(t)|^2 dx - 2 \int_{|x|>1} \chi_1 |\partial_x u(t)|^2 dx  \nonumber \\
&\mathrel{\phantom{=16E(u_0) -\int_{|x|>1} \partial^4_x \theta |u(t)|^2 dx}} +\frac{2}{3-b}\int_{|x|>1}\chi_2 |x|^{-b} |u(t)|^{6-2b} dx, \label{virial estimate 1D proof}
\end{align}
where
\[
\chi_1 := 2(2-\partial^2_x \theta), \quad \chi_2:= (2-b)(2-\partial^2_x\theta) + b\Big(2-\frac{\partial_x \theta}{x}\Big).
\]
We now estimate 
\begin{align*}
\int_{|x|>1} \chi_2 |x|^{-b}|u(t)|^{6-2b} dx & \leq \Big(\sup_{|x|>1} \chi_2 |x|^{-b} |u(t)|^{4-2b} \Big) \|u(t)\|^2_{L^2(|x|>1)} \\
& \leq \|\rho u(t)\|^{4-2b}_{L^\infty(|x|>1)} \|u(t)\|^2_{L^2(|x|>1)},
\end{align*}
where $\rho(x):= \chi_2^{\frac{1}{4-2b}}(x)$. Using a variant of the Gagliardo-Nirenberg inequality (see e.g. \cite[Lemma 2.1]{OgawaTsutsumi2}), we bound
\[
\|\rho u(t)\|_{L^\infty(|x|>1)}  \leq \|u(t)\|^{1/2}_{L^2(|x|>1)} \Big[2 \|\rho^2 \partial_x u(t)\|_{L^2(|x|>1)} + \|u(t)\partial_x(\rho^2)\|_{L^2(|x|>1)} \Big]^{1/2}.
\] 
Thus,
\begin{align}
\int_{|x|>1} \chi_2 |x|^{-b} |u(t)|^{6-2b} dx &\leq \|u(t)\|^{4-b}_{L^2(|x|>1)} \Big[ 2 \|\rho^2 \partial_x u(t)\|_{L^2(|x|>1)} + \|u(t)\partial_x(\rho^2)\|_{L^2(|x|>1)} \Big]^{2-b} \nonumber \\
& \leq \|u(t)\|^{4-b}_{L^2(|x|>1)} 2^{1-b} \Big[ 2^{2-b} \|\rho^2 \partial_x u(t)\|^{2-b}_{L^2(|x|>1)} + \|u(t)\partial_x(\rho^2)\|^{2-b}_{L^2(|x|>1)}\Big] \nonumber \\
& \leq 2^{3-2b}\|u(t)\|^{4-b}_{L^2(|x|>1)}\|\rho^2 \partial_x u(t)\|^{2-b}_{L^2(|x|>1)} \nonumber \\
& \mathrel{\phantom{\leq 2^{3-2b}\|u(t)\|^{4-b}_{L^2(|x|>1)}}}+ 2^{1-b} \|u(t)\|^{6-2b}_{L^2(|x|>1)} \|\partial_x(\rho^2)\|^{2-b}_{L^\infty(|x|>1)}. \label{estimate nonlinear term 1D}
\end{align}
We next estimate $\|\partial_x(\rho^2)\|_{L^\infty(|x|>1)}$. By the definition of $\rho$, we write
\[
\partial_x(\rho^2) = \frac{1}{2-b} \frac{\partial_x\chi_2}{\chi_2^{\frac{1-b}{2-b}}}.
\]
\indent On $1<|x|\leq 1+1/\sqrt{3}$, a direct computation shows
\[
2-\frac{\partial_x \theta}{x} = 2\frac{(|x|-1)^3}{|x|}, \quad 2-\partial^2\theta = 6(|x|-1)^2.
\]
Thus,
\[
\chi_2 = 6(2-b) (|x|-1)^2 + 2b\frac{(|x|-1)^3}{|x|},
\]
and
\[
\renewcommand*{\arraystretch}{1.5}
\partial_x \chi_2 = \left\{
\begin{array}{llc}
(x-1)\left[12(2-b)   + 2b\frac{(x-1)(3x-1)}{x^2}\right] &\text{if}& 1<x\leq 1+1/\sqrt{3}, \\
(x+1)\left[12(2-b)  + 2b\frac{(x+1)(3x-1)}{x^2}\right] &\text{if}& -(1+1/\sqrt{3})\leq x <-1.
\end{array}
\right.
\]
Thus
\[
\renewcommand*{\arraystretch}{2}
\frac{\partial_x \chi_2}{\chi_2^{\frac{1-b}{2-b}}} = \left\{
\begin{array}{llc}
(x-1)^{\frac{b}{2-b}}\frac{\left[12(2-b)   + 2b\frac{(x-1)(3x-1)}{x^2}\right]}{\left[6(2-b) + 2b\frac{x-1}{x} \right]^{\frac{1-b}{2-b}}} &\text{if}& 1<x\leq 1+1/\sqrt{3}, \\
(x+1)^{\frac{b}{2-b}}\frac{\left[12(2-b)   + 2b\frac{(x+1)(3x-1)}{x^2}\right]}{\left[6(2-b) + 2b\frac{x+1}{x} \right]^{\frac{1-b}{2-b}}}&\text{if}& -(1+1/\sqrt{3})\leq x <-1.
\end{array}
\right.
\]
This implies that $\partial_x \chi_2/\chi_2^{\frac{1-b}{2-b}}$ is uniformly bounded on $1<|x| \leq 1+1/\sqrt{3}$. \newline
\indent On $|x|>1+1/\sqrt{3}$, we note that $\chi_2 \geq 4$ since $\partial^2_x \theta$ and $\partial_x \theta/x$ are both non-positive there by the choice of $\vartheta$. We thus simply bound
\[
\Big|\partial_x\chi_2/\chi_2^{\frac{1-b}{2-b}}\Big| \lesssim \|\partial_x \vartheta\|_{L^\infty} + \|\partial^2_x \vartheta\|_{L^\infty} \lesssim N. 
\] 
Therefore,
\[
\|\partial_x(\rho^2)\|_{L^\infty(|x|>1)} \lesssim 1+N.
\]
Combining this with $(\ref{estimate nonlinear term 1D})$, we obtain
\begin{align}
\int_{|x|>1}\chi_2 |x|^{-b} |u(t)|^{6-2b} dx \leq 2^{3-2b} \|u(t)\|^{4-b}_{L^2(|x|>1)} \|\rho^2 \partial_x u(t)\|^{2-b}_{L^2(|x|>1)} + C(1+N)^{2-b} \|u(t)\|^{6-2b}_{L^2(|x|>1)}, \label{estimate nonlinear term 1D result}
\end{align}
for some constant $C>0$. We thus get from $(\ref{virial estimate 1D proof})$ and $(\ref{estimate nonlinear term 1D result})$ that
\begin{align*}
\frac{d^2}{dt^2} V_\theta (t) &\leq  16E(u_0) + N \|u(t)\|^2_{L^2(|x|>1)} - 2\int_{|x|>1} \chi_1 |\partial_x u(t)|^2 dx \\
&\mathrel{\phantom{\leq  16E(u_0)}}+\frac{2^{4-2b}}{3-b} \|u(t)\|^{4-b}_{L^2(|x|>1)} \|\rho^2 \partial_x u(t)\|^{2-b}_{L^2(|x|>1)} + C(1+N)^{2-b} \|u(t)\|^{6-2b}_{L^2(|x|>1)} \\
&\leq 16E(u_0) - 2\int_{|x|>1} \Big(\chi_1 -\frac{2^{3-2b}}{3-b} \chi_2 \|u(t)\|^{4-b}_{L^2(|x|>1)} \Big) |\partial_x u(t)|^2 dx   \\
&\mathrel{\phantom{\leq  16E(u_0)}} + C(1+N)^{2-b} \|u(t)\|^{6-2b}_{L^2(|x|>1)} + N \|u(t)\|^2_{L^2(|x|>1)}.
\end{align*} 
We will show that if $\|u(t)\|_{L^2(|x|>1)}\leq a_0$ for some $a_0>0$ small enough, then
\begin{align}
\chi_1- \frac{2^{3-2b}}{3-b} \chi_2 \|u(t)\|^{4-b}_{L^2(|x|>1)} \geq 0, \label{condition smallness 1D}
\end{align}
for any $|x|>1$. It immediately yields $(\ref{virial estimate 1D})$. It remains to prove $(\ref{condition smallness 1D})$. To do so, it is enough to show for some $a_1>0$ small enough,
\begin{align}
\chi_1- a_1\frac{2^{3-2b}}{3-b}  \chi_2  \geq 0,  \label{condition smallness 1D equivalence}
\end{align}
for any $|x|>1$. \newline
\indent On $1<|x|\leq 1+1/\sqrt{3}$, we have
\[
\chi_1=2(2-\partial^2_x \theta) = 12(|x|-1)^2, 
\]
and
\begin{align*}
\chi_2 =(2-b) (2-\partial^2_x\theta) + b\Big(2-\frac{\partial_x\theta}{x}\Big) &= 6(2-b)(|x|-1)^2 +2b \frac{(|x|-1)^3}{|x|} \\
&= 6(|x|-1)^2 \Big[2-b + b\frac{|x|-1}{3|x|}\Big]<6(|x|-1)^2 \Big[2-b +\frac{b}{3\sqrt{3}}\Big].
\end{align*}
Thus, by taking $a_1>0$ small enough, we have $(\ref{condition smallness 1D equivalence})$. \newline
\indent On $|x|>1+1/\sqrt{3}$, since $\partial^2_x \theta = \partial_x \vartheta \leq 0$, we have $\chi_1 \geq 4$. Moreover, $\chi_2 \leq C$ for some constant $C>0$. We thus get $(\ref{condition smallness 1D equivalence})$ by taking $a_1>0$ small enough. The proof is complete.
\end{proof}
\section{Mass-critical case $\alpha=\alpha_\star$} 
\setcounter{equation}{0}
In this section, we will give the proof of Theorem $\ref{theorem blowup mass-critical}$. 
\subsection{The case $d\geq 1, E(u_0)<0$ and $xu_0 \in L^2$} Applying $(\ref{standard virial identity})$ with $\alpha=\alpha_\star$, we see that
\[
\frac{d^2}{dt^2} \|xu(t)\|^2_{L^2} = 8 \|\nabla u(t)\|^2_{L^2} -\frac{16}{\alpha_\star+2} \int |x|^{-b} |u(t,x)|^{\alpha_\star+2}dx = 16E(u_0)<0.
\]
By the classical argument of Glassey \cite{Glassey}, the solution must blow up in finite time.
\subsection{The case $d\geq 2, E(u_0)<0$ and $u_0$ is radial}
We use the localized virial estimate $(\ref{localized virial identity mass-critical})$ to have
\begin{align*}
\frac{d^2}{dt^2}V_{\varphi_R} (t)&\leq 16E(u_0) -2\int_{|x|>R} \Big(\chi_{1,R} - \frac{\ep}{d+2-b} \chi_{2,R}^{\frac{d}{2-b}} \Big) |\nabla u(t)|^2dx \\
&\mathrel{\phantom{\leq 16E_c(u_0)}}+ O \Big( R^{-2} + \ep R^{-2} +\ep^{-\frac{2-b}{2d-2+b}}R^{-2} \Big), 
\end{align*}
where 
\[
\chi_{1,R} = 2(2-\varphi''_R), \quad \chi_{2,R}= (2-b)(2d-\Delta \varphi_R) + db\Big(2-\frac{\varphi'_R}{r}\Big). 
\]
If we choose a suitable radial function $\varphi_R$ defined by $(\ref{define rescaled varphi})$ so that
\begin{align}
\chi_{1,R} - \frac{\ep}{d+2-b} \chi_{2,R}^{\frac{d}{2-b}} \geq 0, \quad \forall r>R, \label{positive condition}
\end{align}
for a sufficiently small $\ep>0$, then by choosing $R>1$ sufficiently large depending on $\ep$, we see that
\[
\frac{d^2}{dt^2}V_{\varphi_R} (t)\leq 8E(u_0)<0,
\]
for any $t$ in the existence time. This shows that the solution $u$ blows up in finite time. It remains to find $\varphi_R$ so that $(\ref{positive condition})$ holds true. To do so, we follow the argument of \cite{OgawaTsutsumi1}. Let us define a function
\[
\vartheta(r):= \left\{
\begin{array}{c l c}
2r & \text{if}& 0\leq r\leq 1, \\
2[r-(r-1)^3] &\text{if}& 1<r\leq 1+1/\sqrt{3}, \\
\vartheta' <0 &\text{if}& 1+ 1/\sqrt{3} <r < 2, \\
0 &\text{if}& r\geq 2,
\end{array}
\right.
\]
and 
\begin{align*}
\theta(r):= \int_{0}^{r}\vartheta(s)ds.
\end{align*}
It is easy to see that $\theta$ satisfies $(\ref{define theta})$. We thus define $\varphi_R$ as in $(\ref{define rescaled varphi})$. We show that $(\ref{positive condition})$ holds true for this choice of $\varphi_R$. Using the fact
\[
\Delta \varphi_R(x) = \varphi''_R(r) +\frac{d-1}{r}\varphi'_R(r),
\]
we have
\[
\chi_{2,R} = (2-b)(2-\varphi''_R) + (2d-2+b)\Big(2-\frac{\varphi'_R}{r}\Big).
\]
By the definition of $\varphi_R$, 
\[
\varphi'_R(r) = R\theta'(r/R) = R \vartheta(r/R), \quad \varphi''_R(r) = \theta''(r/R) = \vartheta'(r/R).
\]
\indent When $R<r \leq (1+1/\sqrt{3})R$, we have
\[
\chi_{1,R}(r) = 12 \Big(\frac{r}{R}-1\Big)^2, 
\]
and
\[
\chi_{2,R}(r) = 6\Big(\frac{r}{R}-1\Big)^2 \Big[2-b + \frac{(2d-2+b)(r/R-1)}{3r/R} \Big] <6\Big(\frac{r}{R}-1\Big)^2 \Big(2-b+ \frac{2d-2+b}{3\sqrt{3}}\Big).
\]
Since $0<r/R-1<1/\sqrt{3}$, we can choose $\ep>0$ small enough so that $(\ref{positive condition})$ is satisfied. \newline
\indent When $r> (1+1/\sqrt{3})R$, we see that $\vartheta'(r/R) \leq 0$, so $\chi_{1,R}(r) = 2(2-\varphi''_R(r)) \geq 4$. We also have that $\chi_{2,R}(r) \leq C$ for some constant $C>0$. Thus by choosing $\ep>0$ small enough, we have $(\ref{positive condition})$. 
\subsection{The case $d=1$ and $E(u_0)<0$}
We follow the argument of \cite{OgawaTsutsumi2}. We only consider the positive time, the negative one is treated similarly. We argue by contradiction and assume that the solution exists for all $t\geq 0$. We divide the proof in two steps.
\paragraph{\bf Step 1.} We assume that the initial data satisfies
\begin{align}
\delta := -16E(u_0) -C(1+N)^2\|u_0\|^{6-2b}_{L^2} -N\|u_0\|^2_{L^2} &>0, \label{condition 1 step 1} \\
\Big(\int \theta |u_0|^2 dx \Big)^{1/2} \Big( \frac{2}{\delta} \|\partial_x u_0\|^2_{L^2} +1\Big)^{1/2} & \leq \frac{1}{2} a_0, \label{condition 2 step 1}
\end{align}
where $C, N, \theta$ and $a_0$ are defined as in Lemma $\ref{lem virial estimate 1D}$. We will show that if $u_0$ satisfies $(\ref{condition 1 step 1})$ and $(\ref{condition 2 step 1})$, then the corresponding solution satisfies $(\ref{condition a_0})$ for all $t\geq 0$. Since $\theta(x)\geq 1$ for $|x|>1$ and $\delta>0$, we have from $(\ref{condition 2 step 1})$ that 
\begin{align}
\|u_0\|_{L^2(|x|>1)} \leq \frac{1}{2}a_0. \label{assumption initial data 1D}
\end{align}
Let us define
\[
T_0 :=\sup \{ t>0 : \|u(s)\|_{L^2(|x|>1)} \leq a_0, \quad 0\leq s <t \}.
\]
Since $s \mapsto \|u(s)\|_{L^2(|x|>1)}$ is continuous, $(\ref{assumption initial data 1D})$ implies $T_0>0$. If $T_0 =\infty$, we are done. Suppose that $T_0<\infty$. The continuity in $L^2$ of $u(t)$ gives
\begin{align}
\|u(T_0)\|_{L^2(|x|>1)} = a_0. \label{continuity T_0 1D}
\end{align}
On the other hand, $u(t)$ satisfies the assumption of Lemma $\ref{lem virial estimate 1D}$ on $[0,T_0)$. We thus get from Lemma $\ref{lem virial estimate 1D}$ and $(\ref{condition 1 step 1})$ that
\begin{align}
\int \theta |u(t)|^2 dx &\leq \int \theta |u_0|^2 dx - 2t \im{\int \partial_x \theta \overline{u}_0 \partial_x u_0 dx}  - \frac{\delta}{2} t^2 \nonumber \\
&= -\frac{\delta}{2} \Big(t+ \frac{1}{\delta} \im{ \int \partial_x \theta \overline{u}_0  \partial_x u_0 dx} \Big)^2 + \frac{1}{2\delta} \Big(\im{\int\partial_x \theta \overline{u}_0 \partial_x u_0 dx } \Big)^2 + \int \theta |u_0|^2 dx\nonumber \\
&\leq \frac{1}{2\delta} \Big(\im{\int\partial_x \theta \overline{u}_0 \partial_x u_0 dx } \Big)^2 + \int \theta |u_0|^2 dx \nonumber \\
&\leq \frac{1}{2\delta} \|\partial_x \theta u_0\|^2_{L^2} \|\partial_x u_0\|^2_{L^2} + \int \theta |u_0|^2 dx, \label{virial potential estimate 1D}
\end{align}
for all $0\leq t<T_0$. By the definition of $\theta$, it is easy to see that $\theta\geq \vartheta^2/4 = (\partial_x \theta)^2/4$ for any $x\in \R$. Thus, $(\ref{virial potential estimate 1D})$ yields
\[
\int \theta |u(t)|^2 dx \leq \Big(\frac{2}{\delta} \|\partial_x u_0\|^2_{L^2} +1 \Big) \int \theta |u_0|^2 dx,
\]
for all $0\leq t <T_0$. By $(\ref{condition 2 step 1})$ and the fact that $\theta \geq 1$ on $|x|>1$, we obtain
\[
\|u(t)\|_{L^2(|x|>1)} \leq \Big( \int \theta |u(t)|^2 dx\Big)^{1/2}  \leq \frac{1}{2}a_0,
\]
for all $0\leq t<T_0$. By the continuity of $u(t)$ in $L^2$, we get
\[
\|u(T_0)\|_{L^2(|x|>1)} \leq \frac{1}{2} a_0.
\]
This contradicts with $(\ref{continuity T_0 1D})$. Therefore, the assumptions of Lemma $\ref{lem virial estimate 1D}$ are satisfied with $I=[0,\infty)$ and we get
\[
\frac{d^2}{dt^2} V_\theta(t) \leq -\delta<0,
\]
for all $t\geq 0$. This is impossible. Hence, if the initial data $u_0$ satisfies $(\ref{condition 1 step 1})$ and $(\ref{condition 2 step 1})$, then the corresponding solution must blow up in finite time.
\paragraph{\bf Step 2.} In this step, we will use the scaling
\begin{align}
u_\lambda(t,x) = \lambda^{-\frac{1}{2}} u(\lambda^{-2}t, \lambda^{-1}x), \quad \lambda>0 \label{scaling mass-critical 1D}
\end{align}
to transform all initial data with negative energy into initial data satisfying $(\ref{condition 1 step 1})$ and $(\ref{condition 2 step 1})$. Note that the 1D mass-critical $\eqref{INLS}$ is invariant under $(\ref{scaling mass-critical 1D})$, that is, if $u(t)$ is a solution to the 1D mass-critical $\eqref{INLS}$ with initial data $u_0$, then $u_\lambda(t)$ is also a solution to the 1D mass-critical $\eqref{INLS}$ with initial data $u_\lambda(0)$. Moreover, we have
\begin{align}
\|u_\lambda(t)\|_{L^2} &= \|u_\lambda(0)\|_{L^2} = \|u_0\|_{L^2}, \label{mass scaling 1D} \\
E(u_\lambda(t)) &= E(u_\lambda(0)) = \lambda^{-2} E(u_0), \label{energy scaling 1D}
\end{align}
for any $t$ as long as the solution exists. \newline
\indent We will show that there exists $\lambda>0$ such that
\begin{align}
\delta_\lambda =-16E(u_\lambda(0)) -C (1+N)^2 \|u_\lambda(0)\|^{6-2b}_{L^2} - N\|u_\lambda(0)\|^2_{L^2} &>0, \label{condition scaling 1} \\
\Big(\int \theta |u_\lambda(0)|^2 dx \Big)^{1/2} \Big(\frac{2}{\delta_\lambda} \|\partial_x u_\lambda(0)\|^2_{L^2}+1 \Big)^{1/2} &\leq \frac{1}{2} a_0. \label{condition scaling 2}
\end{align}
By $(\ref{mass scaling 1D})$ and $(\ref{energy scaling 1D})$,
\begin{align}
\delta_\lambda = -16 \lambda^{-2} E(u_0) - C(1+N)^2 \|u_0\|^{6-2b}_{L^2} - N\|u_0\|^2_{L^2}. \label{delta lambda}
\end{align}
Thus, if we choose $\lambda>0$ so that
\begin{align}
\lambda <\Big[-16E(u_0) \Big[C(1+N)^2 \|u_0\|^{6-2b}_{L^2} + N\|u_0\|^2_{L^2} \Big]^{-1}\Big]^{1/2}=:\lambda_0,
\end{align}
then $(\ref{condition scaling 1})$ holds true. Moreover, since $\|\partial_x u_\lambda(0)\|^2_{L^2} = \lambda^{-2} \|\partial_x u_0\|^2_{L^2}$, we have from $(\ref{delta lambda})$ that
\begin{align}
\frac{2}{\delta_\lambda} \|\partial_x u_\lambda(0)\|^2_{L^2} = \frac{2\|\partial_x u_0\|^2_{L^2}}{\lambda^2 \delta_\lambda} \leq C_0, \quad 0<\lambda<\lambda_1, \label{condition scaling 2 proof}
\end{align}
for some $\lambda_1>0$, where $C_0$ depends on $\lambda_1$ but does not depend on $\lambda$. We next recall the following fact (see e.g. \cite[Lemma 2.3]{OgawaTsutsumi2}).
\begin{lem} \label{lem approximation}
Let $v\in L^2$ and 
\[
H(x):=\left\{
\begin{array}{cll}
|x|& \text{if}& |x|\leq 1, \\
1&\text{if}& |x|>1.
\end{array}
\right.
\]
Set $v_\lambda(x) = \lambda^{-1/2} v(\lambda^{-1} x)$ for $\lambda>0$. Then for any $\ep>0$, there exists $\lambda_0>0$ such that
\[
\|H v_\lambda\|_{L^2} \leq \ep, \quad 0<\lambda<\lambda_0.
\]
\end{lem}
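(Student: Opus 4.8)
The plan is to reduce the statement to an elementary change of variables followed by a truncation (or density) argument; this is a soft concentration fact, and I expect no serious obstacle, the only point requiring care being that $v$ is assumed merely square integrable and not in the weighted space $L^2(|x|^2\,dx)$.

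First I would substitute $y=\lambda^{-1}x$ in the integral defining $\|Hv_\lambda\|_{L^2}^2$. Using the explicit form of $H$, this yields
\[
\|Hv_\lambda\|_{L^2}^2=\int_{\R}H(\lambda y)^2\,|v(y)|^2\,dy=\lambda^2\int_{|y|\leq 1/\lambda}|y|^2|v(y)|^2\,dy+\int_{|y|>1/\lambda}|v(y)|^2\,dy.
\]
The second term tends to $0$ as $\lambda\to 0$ by the dominated convergence theorem, since $v\in L^2$. For the first term I would fix a radius $R>0$ and split the domain of integration at $|y|=R$: on $\{|y|\leq R\}$ one bounds $\lambda^2|y|^2\leq\lambda^2R^2$ and estimates the contribution by $\lambda^2R^2\|v\|_{L^2}^2$, which is small once $\lambda$ is small with $R$ fixed; on $\{R<|y|\leq 1/\lambda\}$ one uses $\lambda^2|y|^2\leq 1$ to bound the contribution by $\int_{|y|>R}|v|^2\,dy$, which is small once $R$ is large. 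Thus, given $\ep>0$, one first picks $R$ so that $\int_{|y|>R}|v|^2\,dy<\ep^2/4$, and then $\lambda_0>0$ so small that $\lambda^2R^2\|v\|_{L^2}^2<\ep^2/4$ and $1/\lambda>R$ whenever $0<\lambda<\lambda_0$; this gives $\|Hv_\lambda\|_{L^2}^2<\ep^2$ on that range.

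Alternatively, and perhaps more transparently, one can argue by density: since $0\leq H\leq 1$ one has the uniform bound $\|Hv_\lambda\|_{L^2}\leq\|v_\lambda\|_{L^2}=\|v\|_{L^2}$, so the map $v\mapsto\|Hv_\lambda\|_{L^2}$ is $1$-Lipschitz on $L^2$ uniformly in $\lambda$. Approximating $v$ in $L^2$ by a compactly supported function $w$ with support in $\{|y|\leq M\}$, it then suffices to treat $w$, for which $\|Hw_\lambda\|_{L^2}^2=\lambda^2\int_{|y|\leq M}|y|^2|w|^2\,dy\leq\lambda^2M^2\|w\|_{L^2}^2\to 0$ as soon as $1/\lambda>M$. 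Either route closes the proof; as noted, the only mildly delicate point is precisely that one may not assume $\int|y|^2|v(y)|^2\,dy<\infty$, which is why the truncation at a large radius — or equivalently the density reduction — is needed rather than a direct estimate.
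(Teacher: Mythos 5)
Your proof is correct. Note that the paper does not actually prove this lemma: it is recalled as a known fact with a citation to Ogawa--Tsutsumi \cite[Lemma 2.3]{OgawaTsutsumi2}, and the argument there is precisely your first route (change of variables $y=\lambda^{-1}x$, then splitting the region $|y|\leq 1/\lambda$ at a large fixed radius $R$ to avoid assuming $\int |y|^2|v|^2\,dy<\infty$). Both of your versions, the direct truncation and the density reduction, are valid and fill in the omitted proof correctly.
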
 
Applying Lemma $\ref{lem approximation}$, there exists $\lambda_2>0$  such that $\lambda_2<\lambda_1$ and 
\[
\int \theta |u_\lambda(0)|^2 dx \leq 4\|H u_\lambda(0)\|^2_{L^2} \leq \frac{1}{4}(C_0+1)^{-1} a_0^2, \quad 0<\lambda<\lambda_2.
\]
Combining this and $(\ref{condition scaling 2 proof})$, the condition $(\ref{condition scaling 2})$ holds for $0<\lambda<\lambda_2$. Therefore, if we choose $0<\lambda<\min\{\lambda_0, \lambda_2\}$, then $u_\lambda(0)$ satisfies $(\ref{condition scaling 1})$ and $(\ref{condition scaling 2})$. This completes the proof of the case $d=1$ and $E(u_0)<0$.

Combining three cases, we prove Theorem $\ref{theorem blowup mass-critical}$. \defendproof
\begin{rem} \label{rem sufficient condition}
We now show that the condition $E(u_0)<0$ is sufficient for the blowup but it is not necessary. Let $E>0$. We find data $u_0 \in H^1$ so that $E(u_0)=E$ and the corresponding solution $u$ blows up in finite time. We follow the standard argument (see e.g. \cite[Remark 6.5.8]{Cazenave}). Using the standard virial identity $(\ref{standard virial identity})$ with $\alpha=\alpha_\star$, we have
\[
\frac{d^2}{dt^2}\|x u(t)\|^2_{L^2} = 16 E(u_0),
\]
hence 
\[
\|x u(t)\|^2_{L^2} = 8 t^2 E(u_0) + 4t \Big(\im{\int \overline{u}_0 x \cdot \nabla u_0} dx\Big) + \|x u_0\|_{L^2}^2=:f(t).
\]  
We see that if $f(t)$ takes negative values, then the solution must blow up in finite time. In order to make $f(t)$ takes negative values, we need
\begin{align}
\Big(\im{\int \overline{u}_0 x \cdot \nabla u_0} dx \Big)^2 > 2E(u_0) \|x u_0\|^2_{L^2}. \label{sufficient and necessary blowup condition}
\end{align}
Now fix $\theta \in C^\infty_0(\R^d)$ a real-valued function and set $\psi(x)=e^{-i|x|^2} \theta(x)$. We see that $\psi \in C^\infty_0(\R^d)$ and 
\[
\im{\int \overline{\psi} x\cdot \nabla \psi dx} = -2 \int  |x|^2 \theta^2(x) dx <0. 
\]
We now set
\begin{align*}
\begin{aligned}
A &= \frac{1}{2} \|\nabla\psi\|^2_{L^2},& B &= \frac{1}{\alpha_\star +2} \int |x|^{-b}|\psi(x)|^{\alpha_\star+2}dx, \\
C &=\|x\psi\|^2_{L^2}, &  D &= -\im{\int \overline{\psi} x\cdot \nabla \psi dx}.
\end{aligned}
\end{align*}
Let $\lambda, \mu>0$ be chosen later and set $u_0(x) =\lambda \psi(\mu x)$. We will choose $\lambda, \mu>0$ so that $E(u_0) = E$ and $(\ref{sufficient and necessary blowup condition})$ holds true. A direct computation shows
\[
E(u_0) = \lambda^2 \mu^2 \mu^{-d} \frac{1}{2}\|\nabla\psi\|^2_{L^2} - \lambda^{\alpha_\star+2} \mu^b\mu^{-d} \frac{1}{\alpha_\star+2} \int |x|^{-b}|\psi(x)|^{\alpha_\star+2}dx = \lambda^2 \mu^{2-d} \Big(A-\frac{\lambda^{\alpha_\star}}{\mu^{2-b}} B\Big),
\]
and
\[
\im{\int \overline{u}_0 x \cdot\nabla u_0 dx} = \lambda^2 \mu^{-d} \im{\int \overline{\psi} x \cdot \nabla \psi dx} = -\lambda^2 \mu^{-d} D,
\]
and
\[
\|xu_0\|^2_{L^2} = \lambda^2 \mu^{-d-2} \|x\psi\|^2_{L^2} = \lambda^2 \mu^{-d-2} C.
\]
Thus, the conditions $E(u_0)=E$ and $(\ref{sufficient and necessary blowup condition})$ yield
\begin{align}
\lambda^2 \mu^{2-d} &\Big(A-\frac{\lambda^{\alpha_\star}}{\mu^{2-b}}B\Big) = E, \label{condition 1 sufficience}\\
\frac{D^2}{C}&> 2\Big(A-\frac{\lambda^{\alpha_\star}}{\mu^{2-b}}B\Big). \label{condition 2 sufficience}
\end{align}
Fix $0<\ep < \min\Big\{A, \frac{D^2}{2C} \Big\}$ and choose
\[
\frac{\lambda^{\alpha_\star}}{\mu^{2-b}} B =A-\ep.
\]
It is obvious that $(\ref{condition 2 sufficience})$ is satisfied. Condition $(\ref{condition 1 sufficience})$ implies
\[
\ep\lambda^2 \mu^{2-d} = E \quad \text{or} \quad \ep \Big(\frac{B}{A-\ep}\Big)^{\frac{2-d}{2-b}} \lambda^{2+\frac{(2-d)\alpha_\star}{2-b}} = E.
\]
This holds true by choosing a suitable value of $\lambda$. 
\end{rem}
\section{Intercritical case $\alpha_\star<\alpha<\alpha^\star$}
\setcounter{equation}{0}
In this section, we will give the proof of Theorem $\ref{theorem blowup intercritical}$. Let us consider separately two cases: $E(u_0)<0$ and $E(u_0) \geq 0$.
\subsection{The case $E(u_0)<0$}
\paragraph{\bf The case $xu_0 \in L^2$.}
By the standard virial identity $(\ref{standard virial identity})$ and the conservation of energy, we have
\begin{align*}
\frac{d^2}{dt^2} \|x u(t)\|^2_{L^2} &= 8 \|\nabla u(t)\|^2_{L^2} -\frac{4(d\alpha+2b)}{\alpha+2}\int |x|^{-b} |u(t,x)|^{\alpha+2} dx \\
&= 4(d\alpha+2b) E(u(t)) - 2(d\alpha - 4 +2b) \|\nabla u(t)\|^2_{L^2} < 4(d\alpha+2b) E(u_0) <0.
\end{align*}
Here $d\alpha-4+2b>0$ in the intercritical case $\alpha_\star<\alpha<\alpha^\star$. The standard convexity argument implies that the solution blows up in finite time.
\paragraph{\bf The case $u_0$ is radial.}
We use Lemma $\ref{lem localized virial identity}$ together with the conservation of energy to have for any $\ep>0$,
\begin{align*}
\frac{d^2}{dt^2}V_{\varphi_R} (t)&\leq 8\|\nabla u(t)\|^2_{L^2} - \frac{4(d\alpha+2b)}{\alpha+2} \int |x|^{-b}|u(t,x)|^{\alpha+2}dx \\
&\mathrel{\phantom{\leq}} + \left\{
\begin{array}{cl}
O\left( R^{-2} + R^{-[2(d-1) +b]} \|\nabla u(t)\|^2_{L^2} \right) &\text{if } \alpha =4 \\
O \left( R^{-2} + \ep^{-\frac{\alpha}{4-\alpha}} R^{-\frac{2[(d-1)\alpha+2b]}{4-\alpha}} + \ep \|\nabla u(t)\|^2_{L^2} \right) &\text{if } \alpha<4
\end{array}
\right.  \\
&= 4(d\alpha+2b) E(u_0) - 2(d\alpha-4 +2b) \|\nabla u(t)\|^2_{L^2}  \\
&\mathrel{\phantom{=}} + \left\{
\begin{array}{cl}
O\left( R^{-2} + R^{-[2(d-1) +b]} \|\nabla u(t)\|^2_{L^2} \right) &\text{if } \alpha =4, \\
O \left( R^{-2} + \ep^{-\frac{\alpha}{4-\alpha}} R^{-\frac{2[(d-1)\alpha+2b]}{4-\alpha}} + \ep \|\nabla u(t)\|^2_{L^2} \right) &\text{if } \alpha<4,
\end{array}
\right. 
\end{align*}
for any $t$ in the existence time. Since $d\alpha -4 +2b>0$, we take $R>1$ large enough when $\alpha=4$; and take $\ep>0$ small enough and $R>1$ large enough depending on $\ep$ when $0<\alpha<4$ to have that
\[
\frac{d^2}{dt^2}V_{\varphi_R} (t)\leq 2(d\alpha+2b) E(u_0)<0,
\]
for any $t$ in the existence time. This implies that the solution must blow up in finite time. 
\subsection{The case $E(u_0) \geq 0$}
In this case, we assume that the initial data $u_0$ satisfies $(\ref{condition below ground state})$ and $(\ref{condition blowup intercritical})$. We first show $(\ref{property blowup intercritical})$. 
By the definition of energy and multiplying both sides of $E(u(t))$ by $M(u(t))^\sigma$, the sharp Gagliardo-Nirenberg inequality $(\ref{sharp gagliardo-nirenberg inequality})$ yields
\begin{align}
E(u(t)) M(u(t))^\sigma &= \frac{1}{2} \Big( \|\nabla u(t)\|_{L^2} \|u(t)\|_{L^2}^\sigma\Big)^2 -\frac{1}{\alpha+2} \Big(\int |x|^{-b} |u(t,x)|^{\alpha+2} dx\Big) \|u(t)\|^{2\sigma}_{L^2} \nonumber \\
&\geq \frac{1}{2} \Big( \|\nabla u(t)\|_{L^2} \|u(t)\|_{L^2}^\sigma\Big)^2 -\frac{C_{\text{GN}}}{\alpha+2} \|u(t)\|^{\frac{4-2b-(d-2)\alpha}{2} + 2\sigma}_{L^2} \|\nabla u(t)\|^{\frac{d\alpha+2b}{2}}_{L^2} \nonumber \\
&=f(\|\nabla u(t)\|_{L^2} \|u(t)\|^\sigma_{L^2}), \label{define f}
\end{align}
where
\[
f(x) =\frac{1}{2} x^2 -\frac{C_{\text{GN}}}{\alpha+2} x^{\frac{d\alpha+2b}{2}}.
\]
Moreover, using $(\ref{pohozaev identities})$ and $(\ref{sharp constant gagliardo-nirenberg inequality})$, it is easy to see that
\begin{align}
f(\|\nabla Q\|_{L^2}\|Q\|^\sigma_{L^2}) = E(Q)M(Q)^\sigma. \label{property f}
\end{align}
We also have that $f$ is increasing on $(0,x_0)$ and decreasing on $(x_0, \infty)$, where
\[
x_0 = \Big[\frac{2(\alpha+2)}{(d\alpha+2b) C_{\text{GN}}}\Big]^{\frac{2}{d\alpha-(4-2b)}}.
\]
Using again $(\ref{pohozaev identities})$ and $(\ref{sharp constant gagliardo-nirenberg inequality})$, we see that $x_0$ is exactly $\|\nabla Q\|_{L^2} \|Q\|^\sigma_{L^2}$. By $(\ref{define f})$, the conservation of mass and energy together with the assumption $(\ref{condition below ground state})$ imply
\[
f(\|\nabla u(t)\|_{L^2} \|u(t)\|^\sigma_{L^2}) \leq E(u_0) M(u_0)^\sigma <E(Q) M(Q)^\sigma.
\]
Using this, $(\ref{property f})$ and the assumption $(\ref{condition blowup intercritical})$, the continuity argument shows 
\[
\|\nabla u(t)\|_{L^2} \|u(t)\|^\sigma_{L^2}> \|\nabla Q\|_{L^2}\|Q\|^\sigma_{L^2},
\] 
for any $t$ as long as the solution exists. This proves $(\ref{property blowup intercritical})$.\newline
\indent We next pick $\delta>0$ small enough so that 
\begin{align}
E(u_0) M(u_0)^\sigma \leq (1-\delta) E(Q)M(Q)^\sigma. \label{refined blowup estimate}
\end{align} 
This implies 
\begin{align}
f(\|\nabla u(t)\|_{L^2}\|u(t)\|^\sigma_{L^2}) \leq (1-\delta)E(Q)M(Q)^\sigma. \label{refined estimate f}
\end{align}
By Pohozaev identities $(\ref{pohozaev identities})$, we learn that
\begin{align}
E(Q)M(Q)^\sigma = \frac{d\alpha-(4-2b)}{2(d\alpha+2b)} (\|\nabla Q\|_{L^2} \|Q\|^\sigma_{L^2})^2. \label{relation 1}
\end{align}
Moreover, we have from the fact $x_0 = \|\nabla Q\|_{L^2} \|Q\|^\sigma_{L^2}$ that 
\begin{align}
C_{\text{GN}} = \frac{2(\alpha+2)}{d\alpha+2b} \frac{1}{(\|\nabla Q\|_{L^2}\|Q\|^\sigma_{L^2})^{\frac{d\alpha-(4-2b)}{2}}}. \label{relation 2}
\end{align}
By dividing both sides of $(\ref{refined estimate f})$ by $E(Q)M(Q)^\sigma$ and using $(\ref{relation 1})$ and $(\ref{relation 2})$, we obtain
\[
\frac{d\alpha+2b}{d\alpha-(4-2b)} \Big(\frac{\|\nabla u(t)\|_{L^2} \|u(t)\|^\sigma_{L^2}}{\|\nabla Q\|_{L^2} \|Q\|^\sigma_{L^2}} \Big)^2 - \frac{4}{d\alpha-(4-2b)} \Big(\frac{\|\nabla u(t)\|_{L^2} \|u(t)\|^\sigma_{L^2}}{\|\nabla Q\|_{L^2} \|Q\|^\sigma_{L^2}} \Big)^{\frac{d\alpha+2b}{2}} \leq 1-\delta.
\]
The continuity argument then implies that there exists $\delta'>0$ depending on $\delta$ so that
\begin{align}
\frac{\|\nabla u(t)\|_{L^2} \|u(t)\|^\sigma_{L^2}}{\|\nabla Q\|_{L^2} \|Q\|^\sigma_{L^2}} \geq 1+\delta' \quad \text{or} \quad \|\nabla u(t)\|_{L^2} \|u(t)\|^\sigma_{L^2} \geq (1+\delta')\|\nabla Q\|_{L^2} \|Q\|^\sigma_{L^2}. \label{refined property blowup solution}
\end{align}
\indent We also have that for $\ep>0$ small enough,
\begin{align}
8\|\nabla u(t)\|^2_{L^2} -\frac{4(d\alpha+2b)}{\alpha+2} \int |x|^{-b} |u(t,x)|^{\alpha+2} dx + \ep \|\nabla u(t)\|^2_{L^2} \leq -c<0, \label{refined virial estimate intercritical}
\end{align}
for any $t$ in the existence time. Indeed, multiplying the left hand side of $(\ref{refined virial estimate intercritical})$ with the conserved quantity $M(u(t))^\sigma$, we get
\[
\text{LHS}(\ref{refined virial estimate intercritical}) \times M(u(t))^\sigma = 4(d\alpha+2b) E(u(t)) M(u(t))^\sigma + (8+\ep-2d\alpha-4b) \|\nabla u(t)\|^2_{L^2} M(u(t))^\sigma.
\] 
The conservation of mass and energy, $(\ref{refined blowup estimate}), (\ref{relation 1})$ and $(\ref{refined property blowup solution})$ then yield
\begin{align*}
\text{LHS}(\ref{refined virial estimate intercritical}) \times M(u_0)^\sigma &\leq 4(d\alpha+2b) (1-\delta) E(Q)M(Q)^\sigma + (8+\ep -2d\alpha-4b) (1+\delta')^2 (\|\nabla Q\|_{L^2}\|Q\|^\sigma_{L^2})^2 \\
&=  (\|\nabla Q\|_{L^2}\|Q\|^\sigma_{L^2})^2 \Big[ 2(d\alpha-4+2b)[1-\delta - (1+\delta')^2] + \ep (1+\delta')^2 \Big].
\end{align*}
By taking $\ep>0$ small enough, we prove $(\ref{refined virial estimate intercritical})$. 
\paragraph{ \bf The case $xu_0 \in L^2$.} 
The finite time blowup for the intercritical $\eqref{INLS}$ with initial data in $H^1 \cap L^2(|x|^2dx)$ satisfying $(\ref{condition below ground state})$ and $(\ref{condition blowup intercritical})$ was proved in \cite{Farah}. For the sake of completeness, we recall some details. By the standard virial identity $(\ref{standard virial identity})$ and $(\ref{refined virial estimate intercritical})$,
\[
\frac{d^2}{dt^2} \|x u(t)\|^2_{L^2} = 8 \|\nabla u(t)\|^2_{L^2} -\frac{4(d\alpha+2b)}{\alpha+2}\int |x|^{-b} |u(t,x)|^{\alpha+2} dx \leq -c <0. 
\]
This shows that the solution blows up in finite time. 
\paragraph{\bf The case $u_0$ is radial.} 
We first note that under the assumptions of Theorem $\ref{theorem blowup intercritical}$, we can apply Lemma $\ref{lem localized virial identity}$ to obtain for any $\ep>0$,
\begin{align*}
\frac{d^2}{dt^2}V_{\varphi_R} (t)&\leq 8\|\nabla u(t)\|^2_{L^2} - \frac{4(d\alpha+2b)}{\alpha+2} \int |x|^{-b}|u(t,x)|^{\alpha+2}dx \\
&\mathrel{\phantom{\leq}} + \left\{
\begin{array}{cl}
O\left( R^{-2} + R^{-[2(d-1) +b]} \|\nabla u(t)\|^2_{L^2} \right) &\text{if } \alpha =4, \\
O \left( R^{-2} + \ep^{-\frac{\alpha}{4-\alpha}} R^{-\frac{2[(d-1)\alpha+2b]}{4-\alpha}} + \ep \|\nabla u(t)\|^2_{L^2} \right) &\text{if } \alpha<4,
\end{array}
\right. 
\end{align*}
for any $t$ in the existence time. Taking $R>1$ large enough when $\alpha=4$, and $\ep>0$ small enough and $R>1$ large enough depending on $\ep$ when $0<\alpha<4$, we learn from $(\ref{refined virial estimate intercritical})$ that
\[
\frac{d^2}{dt^2}V_{\varphi_R} (t)\leq -c/2<0.
\]
This shows that the solution must blow up in finite time. 

Combining two cases, we prove Theorem $\ref{theorem blowup intercritical}$.

\section*{Acknowledgments}
The author would like to thank his wife-Uyen Cong for her encouragement and support. He would like to thank his supervisor Prof. Jean-Marc Bouclet for the kind guidance and constant encouragement. He also would like to thank the reviewer for his/her helpful comments and suggestions, which greatly improved the presentation of the paper. 


\end{document}